	\theoremstyle{definition}
	\newtheorem{defn}{Definition}[section]
	\theoremstyle{plain}
	\newtheorem{thm}[defn]{Theorem}
	\newtheorem*{thm*}{Theorem}
	\newtheorem{lem}[defn]{Lemma}
	\newtheorem*{lem*}{Lemma}
	\newtheorem{prop}[defn]{Proposition}
	\newtheorem*{prop*}{Proposition}
	\newtheorem{cor}[defn]{Corollary}
	\newtheorem*{cor*}{Corollary}
	\theoremstyle{remark}
	\newtheorem{rmk}[defn]{}
	\DeclareMathOperator{\Ker}{Ker}
	\newcommand{\ms}{\text{ }}
	\newcommand{\R}{\mathbb R}
	\newcommand{\C}{\mathbb C}
	\newcommand{\mb}[1]{\mathbb{#1}}
	\newcommand{\mc}[1]{\mathcal{#1}}
	\newcommand{\frk}[1]{\mathfrak{#1}}
	\newcommand{\norm}[1]{\left\| #1 \right\|}
	\newcommand{\vnorm}{\norm{\hspace{.15cm}}}
	\newcommand{\projnorm}[1]{\norm{#1}^{\widehat{\ms}}}
	\newcommand{\vprojnorm}{\vnorm^{\widehat{\ms}}}
\begin{document}
\title[Analytic Subordination for Free Compression]{Analytic Subordination for Free Compression\\ \small{(Preliminary Version)}}
\author{Stephen Curran}
\address{Department of Mathematics\\University of California at Berkeley\\Berkeley, CA 94720}
\email{curransr@math.berkeley.edu}
\begin{abstract}
We extend the free difference quotient coalgebra approach to analytic subordination to the case of a free compression in free probability.
\end{abstract}

\maketitle
\section{Introduction}

If $\mu, \nu$ are Borel probability measures on $\R$, let $\mu \boxplus \nu$ denote their free additive convolution.  The Cauchy transform $G_{\mu \boxplus \nu}$ is analytically subordinate to $G_\mu$ in the upper half plane, i.e. there is an analytic function $f: \mb H_+(\C) \to \mb H_+(\C)$ such that $G_{\mu \boxplus \nu}(z) = G_\mu(f(z))$.  This result is the main tool in proving regularity properties of free convolution, and was first proved by D. Voiculescu in \cite{entropyi} under an easily removed genericity condition.  Using combinatorial methods, P. Biane showed in \cite{biane} that the subordination is an operator valued phenomenon.  Namely if $X,Y$ are self-adjoint and free random variables in a von Neumann algebra with faithful normal trace state, then the functions $(X-zI)^{-1}$ and $E_{W^*(X)}((X+Y)-zI)^{-1}$ satisfy an analytic subordination relation in the upper half plane.  In \cite{coalgebra}, it was shown that the subordination is due to a certain conditional expectation which is a coalgebra morphism between the free difference quotient coalgebras of $\partial_{X+Y}$ and $\partial_{X}$.  This approach extends to the $B$-valued case, i.e. when $X$ and $Y$ are self-adjoint elements in a von Neumann algebra with faithful normal trace state which are $B$-free, where $1 \in B$ is a W$^*$-subalgebra. 

  In \cite{semigroup}, A. Nica and R. Speicher showed that for any Borel probability measure $\mu$ on $\R$, there is a partially defined continuous free additive convolution semi-group starting at $\mu$, i.e. a continuous family $\{\mu_t: t \geq 1\}$ such that $\mu = \mu_1$, $\mu_{s+t} = \mu_s \boxplus \mu_t$.  In \cite{atoms}, S. T. Belinschi and H. Bercovici showed that the analytic subordination for $\mu^{\boxplus n}$ extends to $\mu_t$.  This can be used to prove certain regularity results for the free additive convolution semigroup.  Here we present a proof of this result following the free difference quotient coalgebra approach, which allows a $B$-valued extension.

If $X$ is a self-adjoint element in a W$^*$-probability space $(M,\tau)$ with distribution $\mu$, and $p$ is a projection in $M$ free from $X$ with $\tau(p) = t^{-1}$, then $\mu_t$ is the distribution of $t pXp$ in $(pMp,t\tau|_{pMp})$.  Here we show that a certain rescaled conditional expectation is a coalgebra morphism between the free difference quotient coalgebras of $\partial_{tpXp}$ and $\partial_X$.  We then follow the approach of (\cite{coalgebra}) to establish the subordination result.

\bigskip
\noindent\textit{Acknowledgments.}  I would like to thank Dan-Virgil Voiculescu for suggesting this problem, and for the helpful discussions and guidance while completing this paper.
\section{Preliminaries}

\begin{rmk}  Free difference quotient derivation

\medskip\noindent If $B$ is a unital algebra over $\C$ and $X$ is algebraically free from $B$, the free difference quotient is the derivation
\begin{equation*}
 \partial_{X:B} : B\langle X \rangle \to B\langle X \rangle \otimes B \langle X \rangle
\end{equation*}
which takes $B$ to $0$ and $X$ to $1 \otimes 1$.  $\partial_{X:B}$ is a coassociative comultiplication, i.e.
\begin{equation*}
 (\partial_{X:B} \otimes \text{id}) \circ \partial_{X:B} = (\text{id} \otimes \partial_{X:B}) \circ \partial_{X:B}
\end{equation*}

\end{rmk}

\begin{rmk}\label{coreps}  Corepresentations of derivation-comultiplications.

\medskip\noindent Suppose $A$ is a unital algebra over $\C$ and that $\partial:A \to A \otimes A$ is a coassociative comultiplication which is a derivation with respect to obvious $A$-bimodule structure on $A \otimes A$.  A corepresentation of $(A,\partial)$ is a $n \times n$ matrix $(a_{ij})_{1 \leq i,j \leq n}$ with entries in $A$ such that
\begin{equation*}
 \partial a_{ij} = \sum_{1 \leq k \leq n} a_{ik} \otimes a_{kj}
\end{equation*}
We recall the following characterization of invertible corepresentations from (\cite[Proposition 1.4]{coalgebra}).

\begin{prop*}
Let $(A,\partial)$ be as above, and suppose that $X \in A$ is such that $\partial(X) = 1 \otimes 1$.  If $\alpha = (a_{ij})_{1 \leq i,j \leq n}$ is a corepresentation of $(A,\partial)$, such that $\alpha$ is invertible in $\frk M_n(A)$, then
\begin{equation*}
 \alpha = ((n_{ij} - X\delta_{ij})_{1 \leq i,j \leq n})^{-1}
\end{equation*}
for some $n_{ij} \in N = \Ker \partial$.  Conversely, if $n_{ij} \in N = \Ker \partial$ are such that the matrix $\beta = (n_{ij} - X\delta_{ij})_{1 \leq i,j \leq n}$ is invertible in $\frk M_n(A)$, then $\alpha = \beta^{-1}$ is a corepresentation of $(A,\partial)$.
\end{prop*}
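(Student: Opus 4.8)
The plan is to pass freely between the corepresentation $\alpha$ and its matrix inverse $\beta$, to differentiate the entrywise relations coming from $\alpha\beta = \beta\alpha = I_n$, and to use the corepresentation identity (in the direct implication) or the hypothesis $n_{ij}\in N$ (in the converse) to collapse the resulting double sums. I will use that $\partial$ is a derivation for the $A$-bimodule structure on $A\otimes A$ with $u\cdot(x\otimes y) = ux\otimes y$ and $(x\otimes y)\cdot u = x\otimes yu$; this forces $\partial(1) = 0$, so $\partial$ annihilates the entries of the identity matrix. It is convenient to extend $\partial$ entrywise to a map $\frk M_n(A)\to\frk M_n(A\otimes A)$, which remains a derivation for the evident left and right actions of $\frk M_n(A)$; in this language the corepresentation condition reads $(\partial\alpha)_{ij} = \sum_k a_{ik}\otimes a_{kj}$.

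For the direct implication I would set $\beta = \alpha^{-1} = (b_{ij})$ and differentiate $\sum_k a_{ik}b_{kj} = \delta_{ij}$, obtaining $\sum_k a_{ik}\cdot\partial b_{kj} = -\sum_k(\partial a_{ik})\cdot b_{kj}$. Substituting $\partial a_{ik} = \sum_l a_{il}\otimes a_{lk}$ and summing over $k$, the identity $\sum_k a_{lk}b_{kj} = \delta_{lj}$ collapses the right-hand side to $-\,a_{ij}\otimes 1$. Then I would multiply by $b_{mi}$ and sum over $i$: using $\sum_i b_{mi}a_{ik} = \delta_{mk}$ on the left and $\sum_i b_{mi}a_{ij} = \delta_{mj}$ on the right yields $\partial b_{mj} = -\,\delta_{mj}(1\otimes 1) = \partial(-X\delta_{mj})$. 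Hence $n_{mj} := b_{mj} + X\delta_{mj}$ lies in $N = \Ker\partial$, so $\beta = (n_{ij} - X\delta_{ij})$ and $\alpha = ((n_{ij} - X\delta_{ij}))^{-1}$, as claimed.

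For the converse I would take $\beta = (n_{ij} - X\delta_{ij})$ invertible with each $n_{ij}\in N$, and put $\alpha = \beta^{-1} = (a_{ij})$. Since $\partial n_{ij} = 0$ and $\partial X = 1\otimes 1$, we have $\partial\beta_{kj} = -\,\delta_{kj}(1\otimes 1)$, so differentiating $\sum_k a_{ik}\beta_{kj} = \delta_{ij}$ leaves $\sum_k(\partial a_{ik})\cdot\beta_{kj} = a_{ij}\otimes 1$. This time I would multiply on the right by $a_{jl}$ and sum over $j$: since $\sum_j\beta_{kj}a_{jl} = \delta_{kl}$, the left side collapses to $\partial a_{il}$ and the right side to $\sum_j a_{ij}\otimes a_{jl}$, which is exactly the corepresentation relation for $\alpha$.

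I do not expect a genuine obstacle — the argument is purely formal — but the step that needs the most care is keeping the two one-sided module actions straight: in the direct implication one collapses by multiplying by $\beta$ on the \emph{left} (to use $\beta\alpha = I_n$ and peel off $b_{mj}$), whereas in the converse one collapses by multiplying by $\alpha$ on the \emph{right} (to use $\beta\alpha = I_n$ in the form $\sum_j\beta_{kj}a_{jl} = \delta_{kl}$ and peel off $a_{il}$). The only other point to check, and it is immediate from the derivation property, is that $\partial$ vanishes on $1$ so that it kills the entries of $I_n$.
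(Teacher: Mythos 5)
Your argument is correct: differentiating the entrywise relations $\alpha\beta=\beta\alpha=I_n$ and collapsing with the corepresentation identity (in the direct implication) or with $\partial n_{ij}=0$ and $\partial X=1\otimes 1$ (in the converse) is exactly the standard computation, and you keep the left and right $A$-module actions on $A\otimes A$ straight at the two places where it matters. The paper itself omits the proof and cites Voiculescu's original, which runs along essentially the same differentiate-the-inverse lines, so your proposal matches the intended argument.
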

\qed
\end{rmk}

\begin{rmk}  Conjugate variables

\medskip\noindent  If $M$ is a von Neumann algebra with faithful normal trace state $\tau$, $1 \in B \subset M$ is a W$^*$-subalgebra, and $X = X^* \in M$ is algebraically free from $B$, then the conjugate variable $\mc J(X:B)$ is defined as the unique (if it exists) element in $L^1(W^*(B\langle X\rangle))$ such that
\begin{equation*}
 \tau(\mc J(X:B)m) = (\tau \otimes \tau)(\partial_{X:B}m) \ms \ms \ms m \in B\langle X \rangle
\end{equation*}
If $\vert \mc J(X:B) \vert_2 < \infty$, then viewing $\partial_{X:B}$ as a densely defined unbounded operator from $L^2(W^*(B \langle X \rangle)) \to L^2(W^*(B \langle X \rangle)) \otimes L^2(W^*(B \langle X \rangle))$ we have $1 \otimes 1 \in \frk D(\partial_{X:B}^*)$, $\mc J(X:B) = \partial_{X:B}^*(1 \otimes 1)$ and $\partial_{X:B}$ is closable, in particular it is closable in $\vnorm$.  (See \cite{fisher}).
\end{rmk}

\begin{rmk} Noncommutative power series

\medskip
\noindent If $K$ is a C$^*$-algebra, $A$ is a C$^*$-subalgebra and $R > 0$ then $A_R\{t\}$ (see \cite{fisher}) will denote the completion of the ring $A\langle t\rangle$ of noncommutative polynomials with coefficients in $A$ with respect to the norm $\vert\ms\vert_R$ defined by
\begin{equation*}
 \vert P \vert_R = \inf \sum_{k \in \mb N} \norm{a_1^{(k)}}\dotsb\norm{a_{n(k)}^{(k)}}R^{n(k)-1}
\end{equation*}
where the infimum is taken over all representations of a noncommutative polynomial $P \in A\langle t \rangle$ as a sum with finite support of the form
\begin{equation*}
 P(t) = \sum_{k \in \mb N} a_1^{(k)}ta_2^{(k)}t\dotsb a_{n(k)}^{(k)}
\end{equation*}
If $X \in K$ and $\norm X < R$, then $f(X)$ is well defined for any $f(t) \in A_R\{t\}$ and $\norm{f(X)} \leq \vert f\vert_R$.  
\end{rmk}

\begin{rmk} \label{half} Half-planes of a C$^*$-algebra

\medskip\noindent If $K$ is a C$^*$-algebra, we define the upper and lower half-planes in $K$ by
\begin{align*}
 \mb H_+(K) &= \{T \in K| \text{Im }T \geq \epsilon 1 \text{ for some }\epsilon > 0\}\\
\mb H_-(K) &= \{T \in K| \text{Im }T \leq -\epsilon 1 \text{ for some }\epsilon > 0\}
\end{align*}
If $T \in \mb H_+(K)$, then $T$ is invertible, and
\begin{align*}
 \norm{T^{-1}} &\leq \epsilon^{-1} & -(\epsilon + \epsilon^{-1}\norm{T}^2)^{-1} \geq \text{Im }T^{-1}
\end{align*}
In particular, $T^{-1} \in \mb H_-(K)$. (See \cite[3.6]{coalgebra}).

\medskip
\noindent By $\Delta_+ \frk M_n(K)$ we will denote the set of matrices $\kappa = (k_{ij})_{1 \leq i,j \leq n} \in \frk M_n(K)$ such that $k_{ii} \in \mb H_+(K), 1 \leq i \leq n$, and $k_{ij} = 0$ for $i < j$.  So $\Delta_+\frk M_n(K)$ is the set of lower triangular matrices with diagonal entries in $\mb H_+(K)$.  Let $\Delta_-\frk M_n(K)$ denote the lower triangular matrices with diagonal entries in $\mb H_-(K)$.  Note that if $\kappa \in \Delta_\pm\frk M_n(K)$ then $\kappa^{-1} \in \Delta_{\mp}\frk M_n(K)$ and $(\kappa^{-1})_{ii} = (\kappa_{ii})^{-1}$ for $1 \leq i \leq n$.
\end{rmk}

\section{The coalgebra morphism associated to free compression}

\rmk In this section we study a certain rescaled conditional expectation, which for a free compression gives a coalgebra morphism between free difference quotient coalgebras.  The framework is $(M,\tau)$, a von Neumann algebra with a faithful normal trace state.  If $A,B$ are subalgebras in $M$, $A \vee B$ will denote the subalgebra generated (algebraically) by $A \cup B$.  If $1 \in A \subset M$ is a $*$-subalgebra, $E^{(M)}_A$ will denote the unique conditional expectation of $M$ onto $W^*(A)$ which preserves $\tau$.  If $p \in M$ is a projection in $M$, $\tau_{p}$ will denote the faithful normal trace state on $pMp$ given by $\tau_{p} = \tau(p)^{-1}\tau|_{pMp}$.

\begin{lem} \label{cmorph}
Suppose that $1 \in B \subset M$ is a $*$-subalgebra, $X = X^* \in M$ and that $p \in M$ is a projection such that $p$ commutes with $B$ and $X$ is algebraically free from $B[p]$.  Let $\alpha$ denote $\tau(p)$, and put $X_p = \alpha^{-1}pXp$, which we consider as a $Bp$-valued random variable in $pMp$.  Define $\psi:pMp \to M$ by $\psi(pmp) = \alpha^{-1}pmp$.  Then $\psi(Bp\langle X_p \rangle) \subset B\langle p,X \rangle$ and 
\begin{equation*}
 (\psi \otimes \psi) \circ \partial_{X_p:Bp} = \partial_{X:B[p]} \circ \psi|_{Bp \langle X_p \rangle}
\end{equation*}
i.e., $\psi|_{B_p\langle X_p\rangle }$ is a coalgebra morphism for the comultiplications $\partial_{X_p:Bp}$ and $\partial_{X:B[p]}$.
\end{lem}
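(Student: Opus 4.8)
The plan is to reduce the asserted identity to a computation on monomials and then keep careful track of the powers of $\alpha=\tau(p)$; no deeper ingredient is involved.

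First I would dispose of the bookkeeping preliminaries. Since $p$ commutes with $B$, the set $Bp=\{bp:b\in B\}$ is a unital subalgebra of $M$ whose unit is $p$, and $B\langle p,X\rangle=B[p]\langle X\rangle$, the algebra carrying $\partial_{X:B[p]}$. An arbitrary element of $Bp\langle X_p\rangle$ is a $\C$-linear combination of monomials $w=b_0X_pb_1X_p\cdots X_pb_n$ with $b_i\in Bp$; since $X_p=\alpha^{-1}pXp$ and each $b_i$ already lie in $B\langle p,X\rangle$, so does $w$, and as $\psi$ is merely multiplication by $\alpha^{-1}$ on $pMp$, we get $\psi(Bp\langle X_p\rangle)\subseteq B\langle p,X\rangle=B[p]\langle X\rangle$. (One should also note that $X_p$ is algebraically free over $Bp$, so that $\partial_{X_p:Bp}$ is defined in the first place; this is inherited from the hypothesis that $X$ is algebraically free over $B[p]$, since collecting powers of $\alpha$ and separating $X$-homogeneous components would turn a nontrivial polynomial relation for $X_p$ over $Bp$ into a nontrivial one for $X$ over $B[p]$.)

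The single substantive point is a rescaling identity. For a monomial $w=b_0X_pb_1\cdots X_pb_n$ of $X_p$-degree $n$, let $w_X:=b_0(pXp)b_1\cdots(pXp)b_n\in B[p]\langle X\rangle$ be obtained by substituting $pXp$ for each occurrence of $X_p$; since each of the $n$ factors $X_p=\alpha^{-1}pXp$ contributes a factor $\alpha^{-1}$ and the intervening copies of $p$ are absorbed into the neighbouring coefficients $b_i\in Bp$, we have $w=\alpha^{-n}w_X$ in $M$, and therefore $\psi(w)=\alpha^{-1}w=\alpha^{-(n+1)}w_X$. Now I would evaluate both sides of the claimed identity on $w$. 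On one side, $\partial_{X_p:Bp}$ is a derivation killing $Bp$ with $\partial_{X_p:Bp}(X_p)=p\otimes p$ (the unit of $Bp$ being $p$), so the Leibniz rule, after absorbing the $p$'s, gives
\[
\partial_{X_p:Bp}(w)=\sum_{j=1}^{n}\big(b_0X_pb_1\cdots X_pb_{j-1}\big)\otimes\big(b_jX_p\cdots X_pb_n\big),
\]
the two legs of the $j$-th term having $X_p$-degrees $j-1$ and $n-j$. Applying $\psi\otimes\psi$ and the rescaling to each leg, the $j$-th term becomes $\alpha^{-j}(w')_j\otimes\alpha^{-(n-j+1)}(w'')_j=\alpha^{-(n+1)}\,(w')_j\otimes(w'')_j$, where $(w')_j,(w'')_j$ are the corresponding $pXp$-monomials. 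On the other side, $\partial_{X:B[p]}$ kills $B[p]\supseteq Bp$ and $\partial_{X:B[p]}(pXp)=p\,(1\otimes1)\,p=p\otimes p$, so the same Leibniz rule gives $\partial_{X:B[p]}(w_X)=\sum_{j=1}^{n}(w')_j\otimes(w'')_j$, whence $\partial_{X:B[p]}(\psi(w))=\alpha^{-(n+1)}\sum_j(w')_j\otimes(w'')_j$. The two expressions agree, and the lemma follows by $\C$-linearity.

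The same computation can be packaged invariantly: on $pMp$ one has $\psi=\alpha^{-1}\cdot\mathrm{id}$, hence $\psi\otimes\psi=\alpha^{-2}\cdot\mathrm{id}$, while the rescaling identity says that on the span of the degree-$n$ monomials $\partial_{X_p:Bp}=\alpha\cdot\partial_{X:B[p]}$, so that $(\psi\otimes\psi)\circ\partial_{X_p:Bp}=\alpha^{-2}\cdot\alpha\,\partial_{X:B[p]}=\partial_{X:B[p]}\circ(\alpha^{-1}\,\mathrm{id})=\partial_{X:B[p]}\circ\psi$. I do not expect any genuine obstacle here; the only things to watch are that the unit of $Bp$ is the projection $p$ rather than $1$ — so that $\partial_{X_p:Bp}(X_p)=p\otimes p$ and the $p$'s thrown up by the Leibniz rule must be absorbed into the $Bp$-valued coefficients — and the accounting of the powers of $\alpha$: the factor $\alpha^{-1}=\tau(p)^{-1}$ built into $\psi$ is exactly the right one, since $\psi\otimes\psi$ supplies one more factor of $\alpha^{-1}$ than $\psi$ does, whereas differentiation drops the $X_p$-degree by one and so loses one factor of $\alpha^{-1}$, and these cancel.
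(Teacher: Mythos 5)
Your proof is correct and is in substance the same as the paper's: the paper simply observes that both composites are derivations on $Bp\langle X_p\rangle$ vanishing on $Bp$ and checks equality on the single generator $X_p$ (via $\partial_{X:B[p]}(pXp)=p\otimes p$), which is exactly the content your monomial-by-monomial Leibniz computation and your closing invariant repackaging make explicit. The extra care you take with the unit $p$ of $Bp$, the powers of $\alpha$, and the algebraic freeness of $X_p$ over $Bp$ is sound, just more detailed than the paper's two-line argument.
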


\begin{proof}
Clearly $\psi(Bp\langle X_p\rangle) \subset B\langle p,X\rangle$, we must show that $\psi$ is comultiplicative.  Both sides of the above equation are derivations from $Bp\langle X_p\rangle$ into $M \otimes M$ with respect to the natural $Bp\langle X_p \rangle$ bimodule structure on $M \otimes M$.  It is clear that $Bp$ is in the kernel of both derivations, we need only compare them on $X_p$.  We have
\begin{equation*}
 \partial_{X:B[p]} \circ \psi (X_p) = \alpha^{-2}\partial_{X:B[p]}(pXp) 
= \alpha^{-2}p \otimes p
= (\psi \otimes \psi)(p \otimes p)
= (\psi \otimes \psi) \circ \partial_{X_p:Bp}(X_p)
\end{equation*}

\end{proof}

\begin{rmk} \label{CondDer}
Certain conditional expectations behave well with respect to freeness and derivations, which allows us to extend the coalgebra morphism $\psi$ to a rescaled conditional expectation.  We will need the following result from (\cite[Lemma 2.2]{coalgebra}).

\begin{lem*}
Let $1 \in B$ be a $W^*$-subalgebra, and let $1 \in A, 1 \in C$ be $*$-subalgebras in $(M,\tau)$.  Assume $A$ and $C$ are $B$-free in $(M,E_B)$.  Let $D: A \vee B \vee C \to (A \vee B \vee C) \otimes (A \vee B \vee C)$ be a derivation such that $D(B \vee C) = 0$ and $D(A \vee B) \subset (A \vee B) \otimes (A \vee B)$.  Then
\begin{equation*}
 (E_{A \vee B} \otimes E_{A \vee B}) \circ D = D \circ E_{A \vee B}|_{A \vee B \vee C}
\end{equation*}
\end{lem*}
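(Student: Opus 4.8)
The plan is to check the asserted identity on the canonical direct-sum decomposition of $A \vee B \vee C$ afforded by $B$-freeness. Write $\mc A = A \vee B$ and $\mc C = B \vee C$ for the two amalgamated factors, for $x$ in either of them put $x^{\circ} = x - E_B(x)$, and set $\mc A^{\circ} = \{a^{\circ} : a \in \mc A\}$ and $\mc C^{\circ} = \{c^{\circ} : c \in \mc C\}$. Since $A$ and $C$ are $B$-free, $A \vee B \vee C$ is, as a vector space, the algebraic amalgamated free product $\mc A *_B \mc C$, hence the linear direct sum of $\mc A = B \oplus \mc A^{\circ}$, of $\mc C^{\circ}$, and of the span of the alternating words $d_1^{\circ} \cdots d_n^{\circ}$ with $n \geq 2$ and consecutive letters from different factors. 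Correspondingly, $E_{A\vee B}$ restricted to $A\vee B\vee C$ is the projection onto the $\mc A$-summand: it is the identity on $\mc A$, it agrees with $E_B$ on $\mc C$, and it annihilates every word of the decomposition in which a $\mc C^{\circ}$-letter occurs (equivalently, every term not lying in $\mc A$) --- this is the standard description of the conditional expectation onto a factor of an amalgamated free product. By linearity it is enough to compare the two sides on each type of summand. On $\mc A = A \vee B$ they agree at once: there $E_{A\vee B}$ is the identity, and since $D(A\vee B)\subset (A\vee B)\otimes(A\vee B)$ the map $E_{A\vee B}\otimes E_{A\vee B}$ is the identity on $\mc A\otimes\mc A$, so both sides equal $D|_{\mc A}$.

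Next I would handle the centered $\mc C$-part: any $c^{\circ}$ lies in $B \vee C$, which $D$ annihilates by hypothesis, and $E_{A\vee B}(c^{\circ}) = 0$, so both sides vanish. This leaves an alternating word $w = d_1^{\circ} \cdots d_n^{\circ}$ with $n \geq 2$. Since such a $w$ involves a $\mc C^{\circ}$-letter, $E_{A\vee B}(w) = 0$ and the right-hand side is $0$; the task is to show $(E_{A\vee B}\otimes E_{A\vee B})(Dw) = 0$. Expand $Dw$ by the Leibniz rule: a letter $d_j^{\circ} \in \mc C^{\circ} \subset B\vee C$ has $D(d_j^{\circ}) = 0$, while a letter $d_j^{\circ} \in \mc A^{\circ} \subset A\vee B$ has $D(d_j^{\circ}) \in (A\vee B)\otimes(A\vee B)$; hence $Dw$ is a finite sum, over the indices $j$ with $d_j^{\circ} \in \mc A^{\circ}$, of simple tensors of the form $(d_1^{\circ}\cdots d_{j-1}^{\circ}\, e) \otimes (f\, d_{j+1}^{\circ} \cdots d_n^{\circ})$ with $e, f \in A \vee B$.

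The crux --- and the step requiring the most care --- is to show that $E_{A\vee B}$ kills at least one leg of each such tensor, after which summing gives $(E_{A\vee B}\otimes E_{A\vee B})(Dw) = 0$ and the verification is complete. Since $w$ alternates and $n \geq 2$, for every contributing index $j$ at least one of the neighbours $d_{j-1}^{\circ}$, $d_{j+1}^{\circ}$ exists and lies in $\mc C^{\circ}$; suppose $d_{j-1}^{\circ}$ does (the case $j = 1$, where one uses $d_{j+1}^{\circ}$ instead, is symmetric). Writing $e = E_B(e) + e^{\circ}$, the product $d_{j-1}^{\circ}E_B(e)$ is again a centered element of $\mc C$ because $E_B$ is a $B$-bimodule map, so on reducing $d_1^{\circ}\cdots d_{j-1}^{\circ}\,e$ to canonical form one gets the word $d_1^{\circ}\cdots d_{j-2}^{\circ}(d_{j-1}^{\circ}E_B(e))$, which retains a $\mc C^{\circ}$-letter, plus the alternating word $d_1^{\circ}\cdots d_{j-1}^{\circ}e^{\circ}$, which still contains the $\mc C^{\circ}$-letter $d_{j-1}^{\circ}$; both are therefore annihilated by $E_{A\vee B}$, so the left leg vanishes. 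The only genuine subtlety is precisely this normal-form bookkeeping: one must rewrite the products $d_1^{\circ}\cdots d_{j-1}^{\circ}\,e$ and $f\,d_{j+1}^{\circ}\cdots d_n^{\circ}$ in the canonical alternating form and observe that every resulting word still carries a centered $\mc C$-letter --- the alternation of $w$ together with $n\geq 2$ being exactly what makes such a neighbouring $\mc C$-letter available.
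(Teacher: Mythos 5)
Your argument is correct, and in fact it supplies a proof where the paper gives none: the lemma is simply quoted from \cite[Lemma 2.2]{coalgebra} and stated without an in-text proof, so there is nothing internal to compare against. Your route --- verify the identity on the spanning set coming from the amalgamated normal form ($\mc A = A\vee B$, centered elements of $\mc C = B\vee C$, and alternating centered words), expand $Dw$ by the Leibniz rule using $D(\mc C^\circ)=0$ and $D(\mc A)\subset \mc A\otimes\mc A$, and kill one leg of each resulting elementary tensor with $E_{A\vee B}$ --- is essentially the computation underlying the original lemma, and your case analysis is sound: alternation together with $n\geq 2$ does guarantee a neighbouring $\mc C^\circ$-letter for every $\mc A^\circ$-letter, and the recentering $e=E_B(e)+e^\circ$ works because $d_{j-1}^\circ E_B(e)$ is again a centered element of $\mc C$ ($E_B$ being a $B$-bimodule map). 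Two small remarks. First, the one ingredient you invoke without proof, that $E_{A\vee B}$ annihilates every alternating centered word containing a $\mc C^\circ$-letter (in particular $E_{A\vee B}|_{\mc C}=E_B|_{\mc C}$), is standard but is the real content being used; it merits either a citation or its short induction: for $x\in \mc A$ one checks $E_B(xw)=0$ by splitting $x=E_B(x)+x^\circ$ and recentering $x^\circ d_1^\circ$ when $d_1^\circ\in\mc A^\circ$, and then normality and faithfulness of $\tau$ give $E_{A\vee B}(w)=0$. Second, you do not actually need the decomposition to be a direct sum (nor injectivity of the canonical map from the algebraic amalgamated free product): both sides of the asserted identity are globally defined linear maps on $A\vee B\vee C$, so agreement on a spanning set suffices; the same observation shows $E_{A\vee B}(A\vee B\vee C)\subset A\vee B$, which is what makes the right-hand side meaningful in the first place.
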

\qed
\end{rmk}

\begin{prop}\label{expmorph}
Suppose that $1 \in B \subset M$ is a W$^*$-subalgebra, $X = X^* \in M$ and that $p \in M$ is a projection such that $p$  is $B$-free with $X$, $p$ commutes with $B$ and $X$ is algebraically free from $B[p]$.  Let $\alpha$ denote $\tau(p)$, and put $X_p = \alpha^{-1}pXp$.  Define $\Psi:pMp \to M$ by $\Psi = E^{(M)}_{B\langle X\rangle} \circ \psi$.  Then
\begin{equation*}
 (\Psi \otimes \Psi) \circ \partial_{X_p:Bp} = \partial_{X:B}\circ \Psi|_{Bp\langle X_p \rangle}
\end{equation*}

\end{prop}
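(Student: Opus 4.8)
The plan is to deduce the stated identity from Lemma \ref{cmorph} by applying the $\tau$-preserving conditional expectation $E^{(M)}_{B\langle X\rangle}$ and commuting it past the free difference quotient by means of the Lemma recalled in Remark \ref{CondDer} (i.e.\ \cite[Lemma 2.2]{coalgebra}). Let $\psi:pMp\to M$ be the rescaled map of Lemma \ref{cmorph}, so that $\Psi = E^{(M)}_{B\langle X\rangle}\circ\psi$. By Lemma \ref{cmorph}, $\psi(Bp\langle X_p\rangle)\subset B\langle p,X\rangle = B[p]\langle X\rangle$ and
\[
 (\psi\otimes\psi)\circ\partial_{X_p:Bp} = \partial_{X:B[p]}\circ\psi|_{Bp\langle X_p\rangle}.
\]
Applying $E_{B\langle X\rangle}\otimes E_{B\langle X\rangle}$ to both sides turns the left-hand side into $(\Psi\otimes\Psi)\circ\partial_{X_p:Bp}$, so what remains is to identify $(E_{B\langle X\rangle}\otimes E_{B\langle X\rangle})\circ\partial_{X:B[p]}\circ\psi|_{Bp\langle X_p\rangle}$ with $\partial_{X:B}\circ\Psi|_{Bp\langle X_p\rangle}$.

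For the middle step I would invoke the Lemma of Remark \ref{CondDer} with the given $W^*$-subalgebra $B$, with $A = B\langle X\rangle$, $C = B[p]$, and $D = \partial_{X:B[p]}$. Since $p$ is $B$-free with $X$, the subalgebras $A$ and $C$ are $B$-free; moreover $A\vee B = B\langle X\rangle$, $B\vee C = B[p]$, and $A\vee B\vee C = B[p]\langle X\rangle$. The derivation $D$ annihilates $B[p] = B\vee C$ by construction, and since $X$ is algebraically free from $B$ (being so from $B[p]$), the restriction $D|_{B\langle X\rangle}$ sends the generators $B\mapsto 0$, $X\mapsto 1\otimes 1$, hence by the Leibniz rule maps $B\langle X\rangle$ into $B\langle X\rangle\otimes B\langle X\rangle$ and by uniqueness equals $\partial_{X:B}$; in particular $D(A\vee B)\subset (A\vee B)\otimes(A\vee B)$. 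The hypotheses of the Lemma being met, we get $(E_{B\langle X\rangle}\otimes E_{B\langle X\rangle})\circ\partial_{X:B[p]} = \partial_{X:B[p]}\circ E_{B\langle X\rangle}|_{B[p]\langle X\rangle}$. Composing with the identity of the previous paragraph yields $(\Psi\otimes\Psi)\circ\partial_{X_p:Bp} = \partial_{X:B[p]}\circ\Psi|_{Bp\langle X_p\rangle}$.

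What is left to check — and what I expect to be the only point requiring real work — is that $\Psi(Bp\langle X_p\rangle)\subset B\langle X\rangle$, so that the right-hand side is well defined and $\partial_{X:B[p]}$ may be replaced there by $\partial_{X:B}$. Because $p$ commutes with $B$ and $p^2=p$, each element of $B[p]\langle X\rangle$ is a linear combination of words $d_0Xd_1X\cdots Xd_n$ with $d_i\in B\cup Bp$; pulling the factors lying in $B\langle X\rangle$ outside the $W^*(B\langle X\rangle)$-bimodule map $E_{B\langle X\rangle}$ reduces the problem to iterated evaluation of $E_{B\langle X\rangle}$ on expressions $\mathring p\,y\,\mathring p$ with $y\in B\langle X\rangle$ and $\mathring p=p-E_B(p)$. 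Writing $y=E_B(y)+\mathring y$, the $B$-freeness of $p$ and $X$ makes the fully alternating centered term $\mathring p\,\mathring y\,\mathring p$ vanish under $E_{B\langle X\rangle}$, and the surviving term equals $E_B(y)\,E_B(\mathring p^{\,2})\in B$; inducting on the number of occurrences of $p$ then gives $E_{B\langle X\rangle}(B[p]\langle X\rangle)\subset B\langle X\rangle$, whence $\Psi(Bp\langle X_p\rangle)\subset B\langle X\rangle$. The chain of the previous paragraph now reads $(\Psi\otimes\Psi)\circ\partial_{X_p:Bp} = \partial_{X:B}\circ\Psi|_{Bp\langle X_p\rangle}$, as desired. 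Beyond this, the anticipated difficulties are bookkeeping: matching the data $(A,B,C,D)$ to the hypotheses of Remark \ref{CondDer}, tracking which conditional expectation acts on which space, and making sure the algebraic domains line up so that $\partial_{X:B[p]}$ and $\partial_{X:B}$ genuinely agree on the range of $\Psi$.
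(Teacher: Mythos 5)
Your proposal is correct and follows essentially the same route as the paper: apply $E^{(M)}_{B\langle X\rangle}\otimes E^{(M)}_{B\langle X\rangle}$ to the intertwining identity of Lemma \ref{cmorph} and commute the expectation past $\partial_{X:B[p]}$ via the lemma of Remark \ref{CondDer} (the paper invokes it with $A=\C[X]$, $C=\C[p]$, which is equivalent to your choice $A=B\langle X\rangle$, $C=B[p]$ since only $A\vee B$, $B\vee C$ and $A\vee B\vee C$ enter). The only difference is that the containment $E^{(M)}_{B\langle X\rangle}(B\langle X,p\rangle)\subset B\langle X\rangle$, which you verify by an explicit centering/induction argument, is simply asserted in the paper as a consequence of the $B$-freeness of $X$ and $p$.
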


\begin{proof}
 Since $X$ and $p$ are $B$-free in $M$, $E^{(M)}_{B \langle X \rangle }B[X,p] \subset B \langle X \rangle $ so that
\begin{equation*}
 \Psi(Bp[X_p]) \subset B \langle X \rangle 
\end{equation*}
By the previous lemma applied to $A = \C[X], B = B, C = \C[p], D = \partial_{X:B[p]}$ we have
\begin{equation*}
 \left(E^{(M)}_{B\langle X\rangle} \otimes E^{(M)}_{B\langle X\rangle}\right) \circ \partial_{X:B[p]} = \partial_{X:B} \circ E^{(M)}_{B\langle X \rangle]}\Big\vert_{B\langle X,p\rangle }
\end{equation*}
The result then follows from composing both sides with $\psi|_{Bp[X_p]}$ and applying Lemma \ref{cmorph}.
\end{proof}

\begin{rmk}
To attach probabilistic meaning to the map $\Psi$, it should be unital and preserve trace and expectation onto $B$.  These properties require the additional assumption that $p$ is independent from $B$ with respect to $\tau$.
\begin{prop*}
Let $M,B,X,p,\Psi$ as above and suppose, in addition to the previous hypotheses, that $p$ is independent from $B$ with respect to $\tau$.  Then $\Psi(bp) = b$ for $b \in B$, in particular $\Psi$ is unital.  Furthermore, $\Psi$ preserves trace and expectation onto $B$, i.e.
\begin{align*}
\tau \circ \Psi &= \tau_{p}\\
\Psi \circ E_{Bp}^{(pMp)} &= E_B^{(M)} \circ \Psi
\end{align*}
\end{prop*}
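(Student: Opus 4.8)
The plan is to verify the three assertions in turn, reducing each to the tower property of $\tau$-preserving conditional expectations together with one standard fact about amalgamated freeness: if $B \subset \mc N_1, \mc N_2 \subset M$ are von Neumann subalgebras which are free with amalgamation over $B$ in $(M,E^{(M)}_B)$, then $E^{(M)}_{\mc N_2}\big|_{\mc N_1} = E^{(M)}_B\big|_{\mc N_1}$. First I would record the preliminaries. Since $p$ commutes with the von Neumann algebra $B$ and the inclusion $pMp \hookrightarrow M$ is normal, the map $\psi$ (a scalar multiple of that inclusion) and hence $\Psi = E^{(M)}_{B\langle X\rangle}\circ\psi$ are normal. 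The hypothesis that $p$ is independent from $B$ with respect to $\tau$ says precisely that $E^{(M)}_B(p) = \tau(p)\,1 = \alpha 1$; combined with the module property this yields $\tau(bp) = \alpha\,\tau(b)$ for every $b \in B$.

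For unitality: if $b \in B$ then $bp = p(bp)p \in pMp$ because $p$ commutes with $B$, so $\Psi(bp) = \alpha^{-1}E^{(M)}_{B\langle X\rangle}(bp) = \alpha^{-1}b\,E^{(M)}_{B\langle X\rangle}(p)$, using $b \in B \subset W^*(B\langle X\rangle)$. Since $W^*(B,p)$ and $W^*(B\langle X\rangle)$ are free with amalgamation over $B$ and $p \in W^*(B,p)$, the standard fact gives $E^{(M)}_{B\langle X\rangle}(p) = E^{(M)}_B(p) = \alpha 1$, whence $\Psi(bp) = b$; taking $b = 1$ and recalling $1_{pMp} = p$ shows $\Psi$ is unital. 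Trace preservation is then immediate: for $y \in pMp$ one has $\tau(\Psi(y)) = \tau(\psi(y)) = \alpha^{-1}\tau(y) = \tau_p(y)$, since $E^{(M)}_{B\langle X\rangle}$ preserves $\tau$.

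The identity $\Psi\circ E^{(pMp)}_{Bp} = E^{(M)}_B\circ\Psi$ is where the only real work lies, and the key step is to identify the conditional expectation on the compressed algebra:
\begin{equation*}
 E^{(pMp)}_{Bp}(y) = \alpha^{-1}E^{(M)}_B(y)\,p \qquad (y \in pMp).
\end{equation*}
Since $E^{(M)}_B(y) \in B$, the right-hand side lies in $W^*(Bp)$, so it suffices to check that $\tau_p\big((y - \alpha^{-1}E^{(M)}_B(y)p)\,bp\big) = 0$ for all $b \in B$; using $pb = bp$, $pyp = y$, cyclicity of $\tau$, and the independence identity $\tau(bp) = \alpha\tau(b)$, both $\tau_p(y\,bp)$ and $\tau_p\big(\alpha^{-1}E^{(M)}_B(y)p\cdot bp\big)$ collapse to $\alpha^{-1}\tau(yb)$. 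I expect this short computation to be the only nontrivial point of the proposition, the care being entirely in tracking the rescaling factors $\alpha$. Granting the claim, I combine it with the tower property $E^{(M)}_B\circ E^{(M)}_{B\langle X\rangle} = E^{(M)}_B$: the right-hand side of the desired identity becomes $E^{(M)}_B\circ\Psi = E^{(M)}_B\circ\psi$, which sends $y\mapsto\alpha^{-1}E^{(M)}_B(y)$, while the left-hand side sends $y\mapsto\Psi\big(\alpha^{-1}E^{(M)}_B(y)p\big) = \alpha^{-1}E^{(M)}_B(y)$ by the already established identity $\Psi(bp)=b$. The two maps agree, which completes the proof.
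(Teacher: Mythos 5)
Your proof is correct and follows essentially the same route as the paper: unitality via $E^{(M)}_{B\langle X\rangle}(p)=E^{(M)}_B(p)=\alpha 1$ (using $B$-freeness and independence), trace preservation from $\tau$-invariance of the conditional expectation, and the key identity $E^{(pMp)}_{Bp}(y)=\alpha^{-1}E^{(M)}_B(y)p$ combined with the tower property. The only cosmetic differences are that you verify that identity by the trace-pairing against $Bp$ rather than by uniqueness of the $\tau_p$-preserving conditional expectation, and you finish by reusing $\Psi(bp)=b$ instead of recomputing with freeness; both variants carry the same content.
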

 
\end{rmk}

\begin{proof}
First remark that independence implies $E^{(M)}_B(p) = \alpha$.  Since $X$ and $p$ are $B$-free,
\begin{equation*}
E^{(M)}_{B \langle X \rangle }(p) = E^{(M)}_B(p) = \alpha
\end{equation*}
Therefore, for $b \in B$ we have
\begin{equation*}
 \Psi(bp) = \alpha^{-1}E^{(M)}_{B \langle X \rangle }(bp) = \alpha^{-1}bE^{(M)}_{B \langle X \rangle }(p) = b
\end{equation*}
Next observe that
\begin{align*}
\tau\left(\Psi(pmp)\right) &= \alpha^{-1}\tau\left(E^{(M)}_{B \langle X \rangle }(pmp)\right)\\
&= \alpha^{-1}\tau(pmp)\\
&= \tau_{p}(pmp)
\end{align*}
so that $\Psi$ preserves trace.  Next we claim that
\begin{equation*}
 E_{Bp}^{(pMp)}(pmp) = \alpha^{-1}E_B^{(M)}(pmp)p 
\end{equation*}
First observe that the right hand side is a conditional expectation from $pMp$ onto $W^*(Bp)$.  Since $E_{Bp}^{(pMp)}$ is the unique such conditional expectation which preserves $\tau_{p}$, it remains only to show that this map is trace preserving.  We have
\begin{equation*}
 \tau_{p}\left(\alpha^{-1}E_B^{(M)}(pmp)p\right) = \alpha^{-2}\tau\left(E_B^{(M)}(pmp)p\right)
= \alpha^{-1}\tau(pmp)
= \tau_{p}(pmp)
\end{equation*}
which proves the claim.  We then have
\begin{align*}
 \left(\Psi \circ E_{Bp}^{(pMp)}\right)(pmp) &= \Psi\left(\alpha^{-1}E_{B}^{(M)}(pmp)p\right)\\
&= \alpha^{-2}E^{(M)}_{B \langle X \rangle }\left(E_B^{(M)}(pmp)p\right)\\
&= E_B^{(M)}\left(\alpha^{-1}E^{(M)}_{B \langle X \rangle }(pmp)\right)\\
&= \left(E_B^{(M)} \circ \Psi\right)(pmp)
\end{align*}
So that $\Psi$ preserves expectation onto $B$.

\end{proof}

\begin{rmk}\label{conjugate}
If $X = X^*, Y = Y^* \in M$ are $B$-free, where $1 \in B \subset M$ is a W$^*$-subalgebra, then if $\mc J(X:B)$ exists so does $\mc J(X+Y:B)$ and is obtained from a conditional expectation.  This is also true for a free compression:
\begin{prop*}
Suppose that $1 \in B \subset M$ is a W$^*$-subalgebra, $X = X^* \in M$ and that $p \in M$ is a projection such that $p$ commutes with $B$ and $X$ is algebraically free from $B[p]$.  Let $\alpha$ denote $\tau(p)$, and put $X_p = \alpha^{-1}pXp$.  Assume that $p$ and $B$ are independent, and that $X$ and $p$ are $B$-freely independent.  If $\mc J(X:B)$ exists, then $\mc J(X_p:Bp)$ exists and is given by
\begin{equation*}
 E^{(pMp)}_{Bp\langle X_p \rangle}(p\mc J(X:B)p)
\end{equation*}
\end{prop*}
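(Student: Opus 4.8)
\end{prop*}

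\end{rmk}

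\begin{proof}
The plan is to check that the element $\xi = E^{(pMp)}_{Bp\langle X_p\rangle}(p\,\mc J(X:B)\,p)$ satisfies the defining relation of the conjugate variable $\mc J(X_p:Bp)$, and then to conclude by uniqueness. Since $\mc J(X:B)\in L^1(W^*(B\langle X\rangle))\subset L^1(M)$, the element $p\,\mc J(X:B)\,p$ lies in $L^1(pMp)$ and $\xi$ is a well-defined element of $L^1(W^*(Bp\langle X_p\rangle))$, with $E^{(pMp)}_{Bp\langle X_p\rangle}$ understood as its contractive $L^1$-extension; note also that $X_p=X_p^*$ is algebraically free from the $W^*$-subalgebra $Bp$ of $pMp$, so that $\mc J(X_p:Bp)$ is meaningful.

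Next I would fix $w\in Bp\langle X_p\rangle$ and run the following chain. Using the module property of the conditional expectation (in its $L^1$ form), traciality of $\tau_{p}$, the identity $w=pwp$, and $pwp=\alpha\,\psi(w)$, one gets
\begin{equation*}
\tau_{p}(\xi w) = \tau_{p}\bigl(p\,\mc J(X:B)\,p\cdot w\bigr) = \alpha^{-1}\tau\bigl(\mc J(X:B)\cdot pwp\bigr) = \tau\bigl(\mc J(X:B)\,\psi(w)\bigr).
\end{equation*}
Because $\mc J(X:B)\in L^1(W^*(B\langle X\rangle))$, the module property again lets one replace $\psi(w)$ by $E^{(M)}_{B\langle X\rangle}\psi(w)=\Psi(w)$, which is a noncommutative polynomial in $X$ over $B$ by (the proof of) Proposition \ref{expmorph}. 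Applying the defining relation of $\mc J(X:B)$ to the polynomial $\Psi(w)$, then the coalgebra morphism identity of Proposition \ref{expmorph}, and finally $\tau\circ\Psi=\tau_{p}$, one obtains
\begin{equation*}
\tau_{p}(\xi w) = \tau\bigl(\mc J(X:B)\,\Psi(w)\bigr) = (\tau\otimes\tau)\bigl(\partial_{X:B}\Psi(w)\bigr) = (\tau\otimes\tau)\bigl((\Psi\otimes\Psi)\partial_{X_p:Bp}w\bigr) = (\tau_{p}\otimes\tau_{p})\bigl(\partial_{X_p:Bp}w\bigr).
\end{equation*}
Since $w$ was arbitrary and $\xi\in L^1(W^*(Bp\langle X_p\rangle))$, this exhibits $\xi$ as a conjugate variable for $X_p$ relative to $Bp$, so $\mc J(X_p:Bp)=\xi$ by uniqueness.

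The conceptual content is carried entirely by properties of $\Psi$ already established: that it is a coalgebra morphism from $\partial_{X_p:Bp}$ to $\partial_{X:B}$, that $\tau\circ\Psi=\tau_{p}$, and that it maps $Bp\langle X_p\rangle$ into $B\langle X\rangle$. Thus the step I expect to require the most care is not any of these, but the passage to unbounded elements: justifying the module identities $\tau(\eta m)=\tau(\eta\,E_N(m))$ and $\tau_{p}\bigl(E^{(pMp)}_N(\eta)w\bigr)=\tau_{p}(\eta w)$ for $\eta\in L^1$ and bounded $m,w$ — which follows from $L^1$-density of the bounded part of $W^*(N)$ and continuity of left and right multiplication by a fixed bounded element on $L^1$ — together with confirming that $\Psi(w)$ is genuinely a polynomial rather than merely an $L^2$- or $L^1$-limit, which is exactly what is recorded in the proof of Proposition \ref{expmorph}.
\end{proof}
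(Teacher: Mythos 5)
Your proof is correct and is essentially the paper's own argument: both verify the defining relation of the conjugate variable by the same chain of identities --- the coalgebra morphism property of $\Psi$ from Proposition \ref{expmorph}, the trace preservation $\tau\circ\Psi=\tau_p$, the defining relation of $\mc J(X:B)$ applied to the polynomial $\Psi(w)\in B\langle X\rangle$, and the $L^1$-module properties of the conditional expectations --- with your chain simply read in the opposite direction (starting from $\tau_p(\xi w)$ rather than from $(\tau_p\otimes\tau_p)(\partial_{X_p:Bp}w)$). Your extra remarks on the $L^1$-extension of $E^{(pMp)}_{Bp\langle X_p\rangle}$ and on $\Psi(w)$ being genuinely a polynomial make explicit points the paper leaves implicit, but they do not change the route.
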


\end{rmk}

\begin{proof}
Let $\Psi$ be as above, then for $pmp \in Bp \langle X_p \rangle$ we have
\begin{align*}
 (\tau_{p} \otimes \tau_p)(\partial_{X_p:Bp}(pmp)) &= (\tau \otimes \tau)(\partial_{X:B}\Psi(pmp)) \\
&= \alpha^{-1}\tau\left(\mc J(X:B)E^{(M)}_{B \langle X\rangle}(pmp)\right)\\
&= \alpha^{-1}\tau(\mc J(X:B)pmp)\\
&= \tau_{p}((p\mc J(X:B)p)pmp)\\
&= \tau_{p}\left(E^{(pMp)}_{Bp\langle X_p\rangle}(p\mc J(X:B)p)pmp\right)
\end{align*}

\end{proof}

\section{Completely positive morphisms between free difference quotient coalgebras}
\noindent In this section we will prove the analytic subordination result for a free compression.  We will follow Voiculescu's approach in (\cite[Section 3]{coalgebra}).

\begin{rmk}\label{cloder}
We begin with a standard result on unbounded derivations on C$^*$-algebras (\cite{coalgebra},\cite{bratelli})
\begin{lem*}
Let $K,L$ be unital C$^*$-algebras, let $\varphi_1,\varphi_2:K \to L$ be unital $*$-homorphisms, let $1 \in A \subset K$ be a unital $*$-subalgebra, and let $D:A \to L$ be a closable derivation with respect to the $A$-bimodule structure on $L$ defined by $\varphi_1,\varphi_2$.  The closure $\overline{D}$ is then a derivation, and the domain of definition $\frk D(\overline{D})$ is a subalgebra.  Moreover, if $a \in A$ is invertible in $K$, then $a^{-1} \in \frk D(\overline{D})$ and
\begin{equation*}
 \overline D(a^{-1}) = -\varphi_1(a^{-1})D(a)\varphi_2(a^{-1})
\end{equation*}

\end{lem*}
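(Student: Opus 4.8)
The plan is to work directly from the definition of the closure, in three stages matching the three assertions. For the first two, suppose $x,x' \in \frk{D}(\overline D)$ and choose $a_n,b_n \in A$ with $a_n \to x$, $b_n \to x'$ in $K$ and $D(a_n) \to \overline D(x)$, $D(b_n) \to \overline D(x')$ in $L$. Then $a_n b_n \to x x'$ since multiplication is jointly continuous on norm-bounded sets, and since a $*$-homomorphism of C$^*$-algebras is contractive we may pass to the limit in $D(a_nb_n) = \varphi_1(a_n)D(b_n) + D(a_n)\varphi_2(b_n)$ to get
\[
 D(a_nb_n) \longrightarrow \varphi_1(x)\overline D(x') + \overline D(x)\varphi_2(x').
\]
By definition of $\overline D$ this shows $xx' \in \frk{D}(\overline D)$ with $\overline D(xx') = \varphi_1(x)\overline D(x') + \overline D(x)\varphi_2(x')$; closedness under linear combinations is immediate from the same argument, so $\frk{D}(\overline D)$ is a subalgebra on which $\overline D$ is a derivation. (Taking $a_n \equiv a$ shows $A \subset \frk{D}(\overline D)$ and $\overline D|_A = D$; taking $a = 1$ in $D(ab) = \varphi_1(a)D(b) + D(a)\varphi_2(b)$ gives $D(1) = 2D(1)$, hence $\overline D(1) = 0$.)

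The substantive point is the third: an $a \in A$ invertible in $K$ lies in $\frk{D}(\overline D)$, even though $a^{-1}$ need not belong to $A$. I would reduce to a positive element by a Neumann series whose partial sums remain in $A$. Put $b = a^*a \in A$ (using that $A$ is a $*$-subalgebra); since $a$ is invertible, $b$ is positive and invertible, so $\delta 1 \le b \le M 1$ with $M = \norm{b} > 0$ and some $\delta > 0$. Then $r := 1 - M^{-1}b \in A$ is self-adjoint with $0 \le r \le (1 - \delta/M)1$, so $\rho := \norm{r} \le 1 - \delta/M < 1$, and $b^{-1} = M^{-1}\sum_{k \ge 0} r^k$ converges in norm with partial sums $s_n = M^{-1}\sum_{k=0}^n r^k \in A$. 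Applying the derivation identity,
\[
 D(s_n) = M^{-1}\sum_{k=1}^n \sum_{j=0}^{k-1} \varphi_1(r)^j\, D(r)\, \varphi_2(r)^{k-1-j},
\]
and since $\varphi_1,\varphi_2$ are contractive the $k$-th inner sum has norm at most $k\rho^{k-1}\norm{D(r)}$; as $\sum_{k \ge 1} k\rho^{k-1} < \infty$, the series converges absolutely in $L$. Hence $D(s_n)$ converges, so $b^{-1} \in \frk{D}(\overline D)$ by definition of the closure. Since $a^{-1} = (a^*a)^{-1}a^* = b^{-1}a^*$ and $a^* \in A \subset \frk{D}(\overline D)$, the subalgebra property gives $a^{-1} \in \frk{D}(\overline D)$.

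The formula is then formal. Since $\varphi_1$ is a unital homomorphism, $\varphi_1(a)$ is invertible with $\varphi_1(a)^{-1} = \varphi_1(a^{-1})$, and similarly for $\varphi_2$. Applying $\overline D$ to $a a^{-1} = 1$ and using $\overline D(1) = 0$ and $\overline D|_A = D$,
\[
 0 = \varphi_1(a)\,\overline D(a^{-1}) + D(a)\,\varphi_2(a^{-1}),
\]
and left multiplication by $\varphi_1(a^{-1})$ yields $\overline D(a^{-1}) = -\varphi_1(a^{-1})D(a)\varphi_2(a^{-1})$.

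The one delicate step is the reduction in the second paragraph. For a general invertible $a$ there is no reason that $\norm{1 - M^{-1}a} < 1$, so the naive Neumann series need not converge; passing to $b = a^*a$, whose spectrum lies in a compact subinterval of $(0,\infty)$, is what makes both the series for $b^{-1}$ and --- via the contractivity of $\varphi_1,\varphi_2$ and the summability of $\sum k\rho^{k-1}$ --- the corresponding series for $\overline D(b^{-1})$ converge. An alternative would be to approximate $b^{-1}$ by polynomials in $b$ coming from Weierstrass approximation of $t \mapsto 1/t$ on $\sigma(b)$, but controlling the $D$-images of such polynomials is less transparent than the geometric estimate used here.
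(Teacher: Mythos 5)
Your argument is correct. Note that the paper does not prove this lemma at all: it is stated as a standard fact with a reference to \cite{coalgebra} and \cite{bratelli}, so there is no in-paper proof to compare against; what you have written is essentially the standard argument those references contain, carried out in full. The two points you handle are exactly the ones that need care. First, since the domain of $\overline D$ is by definition the set of norm limits of sequences from $A$ along which $D$ converges, the subalgebra/derivation property must be checked by multiplying such sequences, and your use of the automatic contractivity of the $*$-homomorphisms $\varphi_1,\varphi_2$ together with joint continuity of multiplication on bounded sets does this correctly (and closability is what makes the limiting value well defined). Second, for the inverse you cannot simply quote the closed-derivation inverse theorem, because one needs $a^{-1}$ to be approximable \emph{from $A$}; your reduction to $b=a^*a$, whose spectrum sits in $[\delta,M]$ with $\delta>0$, makes the Neumann series $b^{-1}=M^{-1}\sum_{k\ge 0}(1-M^{-1}b)^k$ have partial sums in $A$, and the estimate $\norm{D(r^k)}\le k\rho^{k-1}\norm{D(r)}$ with $\rho<1$ gives convergence of the $D$-images, so $b^{-1}\in\frk D(\overline D)$ and then $a^{-1}=b^{-1}a^*$ lies in the domain by the subalgebra property. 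The final formula follows, as you say, by applying the (now established) derivation property of $\overline D$ to $aa^{-1}=1$ and using $\overline D(1)=0$ and $\varphi_i(a^{-1})=\varphi_i(a)^{-1}$. I see no gap.
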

\qed
\end{rmk}

\noindent We will now restate two lemmas from \cite[Section 3]{coalgebra} in the C$^*$-context, since the proofs carry over directly we will omit them.
\begin{lem}\label{3.4} Let $K$ be a unital C$^*$-algebra and $1 \in A \subset K$ a C$^*$-subalgebra.  Suppose $X = X^* \in K$ is algebraically free from $A$, $\norm{X} \leq R$ and $\partial_{X:A}$ is closable.  If $f \in A_R\{t\}$ then $f(X) \in \frk D(\overline \partial_{X:A})$.  Moreover, if $\overline\partial_{X:A} f(X) = 0$, then $f(X) \in A$.
\end{lem}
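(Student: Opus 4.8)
The plan is to obtain the membership statement from the density of the polynomials $A\langle t\rangle$ in $A_R\{t\}$, and the ``moreover'' by pushing the elementary fact $\Ker(\partial_{X:A})=A$ on $A\langle X\rangle$ through to the closure with the help of the identity $\Theta\circ\partial_{X:A}=N$, where $\Theta(p\otimes q)=ptq$ and $N$ is the $t$-degree (Euler) operator. Fix $\rho$ with $\norm X<\rho<R$. The only delicate point is an estimate: for a monomial $a_1ta_2\cdots ta_n$ (with $n-1$ occurrences of $t$), the Leibniz rule gives $\partial_{X:A}(a_1Xa_2\cdots Xa_n)=\sum_{j=1}^{n-1}(a_1X\cdots Xa_j)\otimes(a_{j+1}X\cdots Xa_n)$, so in the projective tensor norm $\norm{\partial_{X:A}(a_1Xa_2\cdots Xa_n)}\leq(n-1)\norm{a_1}\cdots\norm{a_n}\rho^{n-2}$; since $\rho<R$ one has $\sup_n(n-1)(\rho/R)^{n-2}<\infty$, so the degree-linear loss is absorbed and $\partial_{X:A}$ extends to a bounded map $A_R\{t\}\to A_\rho\{t\}\,\widehat{\otimes}\,A_\rho\{t\}$ with $\norm{\partial_{X:A}(P(X))}\leq C_{\rho,R}\vert P\vert_R$ for $P\in A\langle t\rangle$. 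Now choose polynomials $P_N\to f$ in $\vert\cdot\vert_R$. Evaluation at $X$ is $\vert\cdot\vert_R$-contractive, so $P_N(X)\to f(X)$ in $K$, while $\partial_{X:A}(P_N(X))=(\mathrm{ev}_X\otimes\mathrm{ev}_X)(\partial_{X:A}P_N)$ converges in $K\,\widehat{\otimes}\,K$ to $(\mathrm{ev}_X\otimes\mathrm{ev}_X)(\partial_{X:A}f)$, since $\partial_{X:A}$ lands boundedly in $A_\rho\{t\}\,\widehat{\otimes}\,A_\rho\{t\}$ and $\mathrm{ev}_X$ is bounded on $A_\rho\{t\}$ (here $\norm X<\rho$). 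By the assumed closability, $f(X)\in\frk D(\overline\partial_{X:A})$ and $\overline\partial_{X:A}(f(X))=(\mathrm{ev}_X\otimes\mathrm{ev}_X)(\partial_{X:A}f)$.

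For the second assertion, let $N\colon A_R\{t\}\to A_\rho\{t\}$ multiply the homogeneous part of $t$-degree $d$ by $d$; it is bounded by the same kind of estimate, and a direct check on monomials gives $\Theta\circ\partial_{X:A}=N$ on $A\langle t\rangle$, where $\Theta(p\otimes q)=ptq$ extends to a bounded map $K\,\widehat{\otimes}\,K\to K$ compatible with $\mathrm{ev}_X$ (it is a cross-norm contraction up to the factor $\norm X$). Composing the formula above with $\Theta$ yields $\Theta(\overline\partial_{X:A}(f(X)))=(Nf)(X)$ for every $f\in A_R\{t\}$, so $\overline\partial_{X:A}(f(X))=0$ forces $(Nf)(X)=0$ in $K$. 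Choosing $R=\rho_0>\rho_1>\rho_2>\cdots$ with $\rho_k\downarrow\norm X$ and iterating --- $N^kf\in A_{\rho_k}\{t\}$, and once $(N^kf)(X)=0$ it lies in $\Ker\overline\partial_{X:A}$, so the previous line applies again with $\rho_{k}$ as the starting radius --- one obtains $(N^kf)(X)=0$ in $K$ for all $k\geq1$.

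To conclude, write $f=\sum_{d\geq0}\omega_d$ by homogeneous $t$-degree, so $\omega_0\in A$ and $f(X)=\omega_0+\sum_{d\geq1}\omega_d(X)$, and set $\Phi(\lambda)=\sum_{d\geq1}\lambda^d\omega_d(X)\in K$. From $\norm{\omega_d(X)}\leq(\norm X/R)^d\vert\omega_d\vert_R$ and $\sum_d\vert\omega_d\vert_R\leq\vert f\vert_R<\infty$, the function $\Phi$ is holomorphic on the disc $\{\vert\lambda\vert<R/\norm X\}$, which contains the closed unit disc; moreover $\Phi(0)=0$, and applying $(\lambda\,d/d\lambda)^k$ and putting $\lambda=1$ gives $\sum_{d\geq1}d^k\omega_d(X)=(N^kf)(X)=0$ for every $k\geq1$. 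Hence $s\mapsto\Phi(e^s)$ has all derivatives of positive order vanishing at $s=0$, so $\Phi$ is constant near $\lambda=1$, hence constant on its (connected) disc of definition; since $\Phi(0)=0$ this forces $\Phi(1)=0$, i.e. $f(X)=\omega_0\in A$.

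The main obstacle is the degree estimate in the first step: applying $\partial_{X:A}$ to a $t$-degree-$n$ monomial costs a factor $n$, so the convergence of $\sum_k\norm{a_1^{(k)}}\cdots\norm{a_{n(k)}^{(k)}}R^{n(k)-1}$ is not preserved at radius $R$, and one must drop to a strictly smaller radius $\rho$ with $\norm X<\rho<R$ (which is also why one works with $\norm X<R$). Once that is handled the rest is structural: recognizing that the object which descends cleanly to the closure is not $\partial_{X:A}$ itself but $\Theta\circ\partial_{X:A}=N$ --- whose kernel on $A\langle t\rangle$ is obviously the degree-zero part --- and organizing the rescaling bootstrap that promotes $(Nf)(X)=0$ to $(N^kf)(X)=0$ for all $k$, and then to $f(X)\in A$.
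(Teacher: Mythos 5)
Your argument is essentially correct, and it is worth noting that the paper itself gives no proof of this lemma (it defers to \cite{coalgebra}), so your write-up functions as a self-contained substitute in the C$^*$-setting. The first half is the standard radius-shrinking argument: applying $\partial_{X:A}$ to a monomial of $t$-degree $d$ costs only a factor $d$, which is absorbed by dropping from radius $R$ to $\rho<R$, and closability (i.e.\ single-valuedness of $\overline\partial_{X:A}$) then identifies $\overline\partial_{X:A}f(X)$ with the evaluated formal difference quotient. Your treatment of the ``moreover'' part via the Euler operator, i.e.\ the identity $\Theta\circ\partial_{X:A}=N$ with $\Theta(p\otimes q)=pXq$, is sound: single-valuedness of the closure is used correctly in the bootstrap $(N^kf)(X)=0\Rightarrow(N^{k+1}f)(X)=0$, and the final scalar-holomorphy step is valid because $\sum_{d\geq 1}\lambda^d\omega_d(X)$ converges on a disc of radius $R/\norm{X}>1$, so vanishing of all the derivatives $\sum_d d^k\omega_d(X)$ at $\lambda=1$ forces the function to be identically $0$ and hence $f(X)=\omega_0\in A$.

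The one point you must flag explicitly: the lemma is stated with $\norm{X}\leq R$, while your proof uses strict inequality twice --- in choosing $\rho$ with $\norm{X}<\rho<R$, and in needing $R/\norm{X}>1$. The boundary case $\norm{X}=R$ is not reachable by your method, and for the norm closure it appears to genuinely fail: take $A=\C 1$, $K=C([-1,1])$, $X$ the identity function (so $\partial_{X:A}$ is the classical difference quotient, which is closable here), and $f(t)=\sum_{n\geq 1}n^{-2}t^n\in A_1\{t\}$; any limit of $\partial_{X:A}P_N(X)$ with $P_N(X)\to f(X)$ would have to agree off the diagonal with $(f(s)-f(t))/(s-t)$, which is unbounded near $(1,1)$, so $f(X)\notin\frk D(\overline\partial_{X:A})$. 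Since the paper only invokes the lemma with $r=\norm{X}+1>\norm{X}$ in the proof of Theorem \ref{subord}, this is best read as a defect of the stated hypothesis (or as reflecting a weaker, $L^2$-type closure in the cited source) rather than of the application; but as written your proof establishes the lemma only under $\norm{X}<R$, and you should state that restriction.
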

\qed
\begin{lem}\label{3.5}
Let $K,A$ and $X$ as above, $f \in A\langle t \rangle$, and let $P = f(X)$.  Then 
\begin{equation*}
\vert f \vert_R \leq \sum_{p \geq 0} \projnorm{\partial_{X:A}^{(p)}P}_{(p+1)}(\norm X + R)^p
\end{equation*}
where $\vprojnorm_{(s)}$ is the norm on the $s$-fold projective tensor product $K^{\widehat \otimes s}$.
\end{lem}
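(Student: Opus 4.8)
The plan is to reconstruct the coefficients of $f$ from the iterated free difference quotients $\partial_{X:A}^{(q)}P$ by a Taylor-type expansion ``around $X$'', and then to run projective tensor norm estimates. First I would pass to the decomposition of $f$ into its $t$-homogeneous parts, $f = \sum_{p=0}^{d} f_p$, where $d = \deg_t f$ and $f_p$ collects the monomials of $f$ in which $t$ occurs exactly $p$ times. Identifying a $t$-homogeneous element $\sum_k a_0^{(k)} t a_1^{(k)} \dotsb t a_p^{(k)}$ of $A\langle t\rangle$ with the tensor $\Phi_p = \sum_k a_0^{(k)} \otimes \dotsb \otimes a_p^{(k)}$, one checks that $|f_p|_R = R^p\projnorm{\Phi_p}_{(p+1)}$, so that $|f|_R \leq \sum_{p \geq 0} R^p\projnorm{\Phi_p}_{(p+1)}$; moreover $\Phi_p = \varepsilon^{\otimes(p+1)}(\partial_{t:A}^{(p)}f)$, where $\varepsilon : A\langle t\rangle \to A$ is evaluation at $t = 0$. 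It then suffices to estimate each $\projnorm{\Phi_p}_{(p+1)}$.

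The core of the argument is the identity, valid for all $g \in A\langle t\rangle$,
\begin{equation*}
 \varepsilon(g) = \sum_{k \geq 0} (-1)^k\, m_k\bigl(\text{ev}_X^{\otimes(k+1)}(\partial_{t:A}^{(k)}g)\bigr),
\end{equation*}
where $\text{ev}_X$ is evaluation at $t = X$ and $m_k : K^{\widehat\otimes(k+1)} \to K$ denotes the $(k+1)$-fold multiplication with an $X$ inserted between successive factors, $m_k(a_0 \otimes \dotsb \otimes a_k) = a_0 X a_1 X \dotsb X a_k$; the sum is finite since $\partial_{t:A}^{(k)}g = 0$ for $k > \deg_t g$. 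Both sides being linear in $g$, I would check it on a monomial $g = a_0 t a_1 t \dotsb t a_n$: there $\partial_{t:A}^{(k)}g$ is the sum of the $\binom nk$ tensors obtained by turning $k$ of the $n$ copies of $t$ into $\otimes$, and applying $\text{ev}_X$ and then $m_k$ turns every copy of $t$ --- cut or uncut --- back into an $X$, so each of these terms collapses to the single element $a_0 X a_1 \dotsb X a_n$; hence the $k$-th summand equals $(-1)^k\binom nk\,(a_0 X a_1 \dotsb X a_n)$, and since $\sum_{k \geq 0}(-1)^k\binom nk = 0^n$ vanishes for $n \geq 1$ and is $1$ for $n = 0$, the right-hand side equals $\varepsilon(g)$. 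Applying this identity in each of the $p+1$ tensor legs of $\partial_{t:A}^{(p)}f$ and collapsing the resulting iterated difference quotients by coassociativity of $\partial_{t:A}$, i.e.\ $(\partial_{t:A}^{(k_0)} \otimes \dotsb \otimes \partial_{t:A}^{(k_p)}) \circ \partial_{t:A}^{(p)} = \partial_{t:A}^{(p + k_0 + \dotsb + k_p)}$, should give
\begin{equation*}
 \Phi_p = \sum_{q \geq p} (-1)^{q-p} \sum_{\substack{k_0 + \dotsb + k_p = q - p \\ k_j \geq 0}} (m_{k_0} \otimes \dotsb \otimes m_{k_p})(\partial_{X:A}^{(q)}P).
\end{equation*}
I expect the verification of this single-leg identity together with keeping the coassociativity bookkeeping straight to be the main obstacle; everything after it is routine.

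Finally, since $m_k$ has norm at most $\norm{X}^k$ as a map $K^{\widehat\otimes(k+1)} \to K$, the operator $m_{k_0} \otimes \dotsb \otimes m_{k_p}$ has norm at most $\norm{X}^{k_0 + \dotsb + k_p} = \norm{X}^{q-p}$ from $K^{\widehat\otimes(q+1)}$ to $K^{\widehat\otimes(p+1)}$, and the number of tuples $(k_0,\dotsc,k_p)$ of nonnegative integers with $k_0 + \dotsb + k_p = q - p$ is $\binom qp$; hence
\begin{equation*}
 \projnorm{\Phi_p}_{(p+1)} \leq \sum_{q \geq p} \binom qp \norm{X}^{q-p}\projnorm{\partial_{X:A}^{(q)}P}_{(q+1)}.
\end{equation*}
Substituting this into $|f|_R \leq \sum_{p \geq 0} R^p\projnorm{\Phi_p}_{(p+1)}$, interchanging the (finite) sums, and using $\sum_{p=0}^{q}\binom qp R^p\norm{X}^{q-p} = (\norm{X} + R)^q$ then yields the asserted inequality.
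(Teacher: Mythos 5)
Your reconstruction argument is essentially sound, and I checked its components: the identity $\varepsilon(g)=\sum_{k\ge 0}(-1)^k m_k\bigl(\mathrm{ev}_X^{\otimes(k+1)}(\partial_{t:A}^{(k)}g)\bigr)$ on monomials, the coassociativity bookkeeping $(\partial_{t:A}^{(k_0)}\otimes\dotsb\otimes\partial_{t:A}^{(k_p)})\circ\partial_{t:A}^{(p)}=\partial_{t:A}^{(q)}$ with $q=p+k_0+\dotsb+k_p$, the intertwining $\mathrm{ev}_X^{\otimes(q+1)}\circ\partial_{t:A}^{(q)}=\partial_{X:A}^{(q)}\circ\mathrm{ev}_X$, the bound $\norm{m_{k_0}\otimes\dotsb\otimes m_{k_p}}\le\norm{X}^{q-p}$ between projective tensor powers, the count $\binom qp$ of compositions, and the final binomial resummation are all correct, and every sum is finite. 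Note that the paper gives no proof, deferring to (\cite{coalgebra}); your closed-form inclusion--exclusion is essentially the same mechanism as the inductive argument there, so in spirit you are on the paper's route.

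The one step that is not justified is $\vert f_p\vert_R\le R^p\projnorm{\Phi_p}_{(p+1)}$ (you assert equality; this is the direction you actually use). By the lemma's convention $\vprojnorm_{(p+1)}$ is the projective norm of $K^{\widehat\otimes(p+1)}$, i.e.\ an infimum over representations of $\Phi_p$ by elementary tensors with entries in $K$, whereas $\vert\cdot\vert_R$ is an infimum over representations of $f_p$ with coefficients in $A$. Your formula produces representations of $\Phi_p$ whose entries are images under the maps $m_{k_j}$, hence contain copies of $X$ and lie outside $A$ in general, and there is no general way to convert a $K$-representation into an $A$-coefficient representation of comparable weight: the projective tensor norm is not injective, so the norm of $\Phi_p\in A^{\otimes(p+1)}$ computed in $K^{\widehat\otimes(p+1)}$ can be strictly smaller than the infimum over $A$-representations (equivalently, contractive multilinear forms on $A$ need not extend contractively to $K$). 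The step is valid as soon as there is a norm-one projection $E$ of $K$ onto $A$: applying $E^{\otimes(p+1)}$ to a $K$-representation fixes $\Phi_p$ and does not increase its weight, so the two infima coincide. Such an $E$ exists in every application in this paper and in Voiculescu's original setting ($K$ a tracial von Neumann algebra, $A$ a W$^*$-subalgebra, $E=E_A^{(K)}$), but it is not among the hypotheses of the lemma as stated in the C$^*$-context, so you should either invoke such a projection explicitly (restricting the setting accordingly) or supply a separate argument for this single inequality. The reverse inequality $R^p\projnorm{\Phi_p}_{(p+1)}\le\vert f_p\vert_R$, which your claimed equality also contains, is fine but not needed.
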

\qed

\noindent  We are now prepared to prove a subordination result for completely positive coalgebra morphisms between free difference quotient coalgebras.  The proof follows (\cite[Proposition 3.7]{coalgebra}).

\begin{thm} \label{subord}
Let $K$ and $L$ be unital C$^*$-algebras, and $1 \in A \subset K$, $1 \in B \subset L$ C$^*$-subalgebras.  Let $X = X^* \in K$ algebraically free from $A$, $Y = Y^* \in L$ algebraically free from $B$. Suppose $\Psi:L \to K$ is a unital,  completely positive linear map such that $\Psi(B\langle Y \rangle) \subset A \langle X \rangle $ and
\begin{equation*}
(\Psi \otimes \Psi) \circ \partial_{Y:B} = \partial_{X:A} \circ \Psi|_{B\langle Y \rangle}
\end{equation*}
Suppose also that $\partial_{X:A}$ and $\partial_{Y:B}$ are closable.  Then there is a holomorphic map $F_n:\mb H_+(\frk M_n(B)) \to \mb H_+(\frk M_n(A))$ such that
\begin{equation*}
 \frk M_n(\Psi)\left((Y\otimes I_n - \beta)^{-1}\right) = (X\otimes I_n - F_n(\beta))^{-1}
\end{equation*}
for $\beta \in \mb H_+(\frk M_n(B))$.
\end{thm}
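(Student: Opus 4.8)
The plan is to put $G_{n}(\beta)=\frk M_{n}(\Psi)\bigl((Y\otimes I_{n}-\beta)^{-1}\bigr)$ for $\beta\in\mb H_{+}(\frk M_{n}(B))$ and to show $F_{n}(\beta):=X\otimes I_{n}-G_{n}(\beta)^{-1}$ is the required map, following \cite[Proposition 3.7]{coalgebra}. First the half-plane part. Fix $\beta\in\mb H_{+}(\frk M_{n}(B))$ with $\text{Im}\,\beta\ge\epsilon I_{n}$; then $Y\otimes I_{n}-\beta\in\mb H_{-}(\frk M_{n}(L))$ is invertible and, by Remark~\ref{half}, $\text{Im}\,(Y\otimes I_{n}-\beta)^{-1}\ge\bigl(\epsilon+\epsilon^{-1}(\norm{Y}+\norm{\beta})^{2}\bigr)^{-1}I_{n}$. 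Since $\Psi$ is unital completely positive, $\frk M_{n}(\Psi)$ is unital, positive and $*$-preserving; it therefore commutes with $\text{Im}(\cdot)$ and carries $S\ge cI_{n}$ to $\frk M_{n}(\Psi)(S)\ge cI_{n}$, so $G_{n}(\beta)\in\mb H_{+}(\frk M_{n}(K))$ for \emph{every} $\beta\in\mb H_{+}(\frk M_{n}(B))$; in particular $G_{n}(\beta)$ is invertible with $G_{n}(\beta)^{-1}\in\mb H_{-}(\frk M_{n}(K))$. As $\beta\mapsto(Y\otimes I_{n}-\beta)^{-1}$ is holomorphic and $\frk M_{n}(\Psi)$ is bounded linear, $G_{n}$, and hence $F_{n}$, is holomorphic on $\mb H_{+}(\frk M_{n}(B))$; moreover $\text{Im}\,F_{n}(\beta)=-\text{Im}\,G_{n}(\beta)^{-1}\ge\epsilon'I_{n}$ for some $\epsilon'>0$, by Remark~\ref{half}. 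So it suffices to prove that the entries of $F_{n}(\beta)$ lie in $A$: then $F_{n}(\beta)\in\mb H_{+}(\frk M_{n}(A))$ and $G_{n}(\beta)=(X\otimes I_{n}-F_{n}(\beta))^{-1}$, which is the assertion.

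To get the entries I use a power-series estimate on a small region. Iterating the intertwining hypothesis gives $\partial_{X:A}^{(p)}\circ\Psi=\Psi^{\otimes(p+1)}\circ\partial_{Y:B}^{(p)}$ on $B\langle Y\rangle$; passing to $n\times n$ amplifications (so $X\otimes I_{n}$ is algebraically free from $\frk M_{n}(A)$, $\partial_{X\otimes I_{n}:\frk M_{n}(A)}=\partial_{X:A}^{(n)}$, and $\frk M_{n}(A)_{R}\{t\}=\frk M_{n}(A_{R}\{t\})$, with the evident matricial forms of Remark~\ref{cloder} and Lemmas~\ref{3.4}--\ref{3.5}), and using that $\frk M_{n}(\Psi)$ is again unital completely positive, hence completely contractive with all projective tensor powers contractions, one obtains the following. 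Fix $R_{0}>\norm{X}+\norm{Y}$ and $\norm{X}\le\rho\le R_{0}-\norm{X}-\norm{Y}$. For every matricial polynomial $f$ with $f(X\otimes I_{n})=\frk M_{n}(\Psi)\bigl(P(Y\otimes I_{n})\bigr)$, $P$ a matricial polynomial, Lemma~\ref{3.5} (for $X\otimes I_{n}$), the intertwining, contractivity of the $\Psi^{\otimes(p+1)}$, and a routine bound on iterated free difference quotients of monomials (for $Y\otimes I_{n}$) give $\vert f\vert_{\rho}\le\sum_{p\ge0}\projnorm{\partial_{Y:B}^{(p)}\bigl(P(Y\otimes I_{n})\bigr)}_{(p+1)}(\norm{X}+\rho)^{p}\le\vert P\vert_{R_{0}}$. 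Hence $\frk M_{n}(\Psi)$ carries the image of $\frk M_{n}(B)_{R_{0}}\{s\}$ under evaluation at $Y\otimes I_{n}$ contractively into the image of $\frk M_{n}(A)_{\rho}\{t\}$ under evaluation at $X\otimes I_{n}$, and the intertwining of all (now closed) iterated free difference quotients persists. Now fix $r_{0}>R_{0}$ and let $\Omega\subset\mb H_{+}(\frk M_{n}(B))$ be a small open ball about $ir_{0}I_{n}$. For $\beta\in\Omega$ one has $\beta^{-1}\in\frk M_{n}(B)$ (spectral permanence) with $\norm{\beta^{-1}}R_{0}<1$, so $(Y\otimes I_{n}-\beta)^{-1}=-\sum_{k\ge0}\bigl(\beta^{-1}(Y\otimes I_{n})\bigr)^{k}\beta^{-1}$ converges in $\frk M_{n}(B)_{R_{0}}\{s\}$. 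Applying the preceding term by term: $G_{n}(\beta)$ lies in the image of $\frk M_{n}(A)_{\rho}\{t\}$ under evaluation at $X\otimes I_{n}$; using Remark~\ref{cloder} for $(Y\otimes I_{n}-\beta)^{-1}$, the intertwining, and closability of $\partial_{X:A}$, $G_{n}(\beta)\in\frk D(\overline{\partial}_{X:A}^{(n)})$ with $\overline{\partial}_{X:A}\bigl(G_{n}(\beta)_{ij}\bigr)=-\sum_{k}G_{n}(\beta)_{ik}\otimes G_{n}(\beta)_{kj}$; and, shrinking $\Omega$ and using unitality of $\frk M_{n}(\Psi)$, $G_{n}(\beta)$ differs from $\frac{i}{r_{0}}I_{n}$ by an element of $\frk M_{n}(A)_{\rho}\{t\}$ of norm $<\frac{1}{2r_{0}}$, so $G_{n}(\beta)^{-1}$ is a convergent Neumann series in $\frk M_{n}(A)_{\rho}\{t\}$ and hence also lies in the image of $\frk M_{n}(A)_{\rho}\{t\}$ under evaluation at $X\otimes I_{n}$.

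Now I run the argument of the Proposition in Remark~\ref{coreps}, in this completed setting. For $\beta\in\Omega$, both $G_{n}(\beta)$ (as above) and $G_{n}(\beta)^{-1}$ (by Lemma~\ref{3.4}, its entries being $A_{\rho}\{t\}$-functions of $X$ with $\rho\ge\norm{X}$) lie in $\frk D(\overline{\partial}_{X:A}^{(n)})$, so the Leibniz rule for $\overline{\partial}_{X:A}$ (Remark~\ref{cloder}) applied to $G_{n}(\beta)G_{n}(\beta)^{-1}=I_{n}$, together with the corepresentation identity, gives $\overline{\partial}_{X:A}\bigl((G_{n}(\beta)^{-1})_{ij}\bigr)=\delta_{ij}(1\otimes1)=\overline{\partial}_{X:A}(X)\,\delta_{ij}$. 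Hence $(G_{n}(\beta)^{-1}-X\otimes I_{n})_{ij}=f_{ij}(X)$ with $f_{ij}\in A_{\rho}\{t\}$ and $\overline{\partial}_{X:A}\bigl(f_{ij}(X)\bigr)=0$, so $f_{ij}(X)\in A$ by Lemma~\ref{3.4}; thus $F_{n}(\beta)\in\frk M_{n}(A)$ for $\beta\in\Omega$. Finally $F_{n}$ is holomorphic on the convex, hence connected, set $\mb H_{+}(\frk M_{n}(B))$ and takes values in the closed subspace $\frk M_{n}(A)$ on the nonempty open set $\Omega$; applying continuous linear functionals that annihilate $\frk M_{n}(A)$ and the identity theorem for scalar holomorphic functions, $F_{n}$ takes values in $\frk M_{n}(A)$ everywhere. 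With the first paragraph and $\mb H_{+}(\frk M_{n}(A))=\mb H_{+}(\frk M_{n}(K))\cap\frk M_{n}(A)$, the theorem follows.

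The main obstacle is the power-series estimate of the second paragraph: pushing $\vert\,\cdot\,\vert_{R_{0}}$-control through the merely linear map $\frk M_{n}(\Psi)$ to $\vert\,\cdot\,\vert_{\rho}$-control while preserving the intertwining of \emph{all} iterated free difference quotients, and then justifying the interchange of the closed derivation with the limit of the Neumann series. The remaining steps are bookkeeping with Remark~\ref{half}, the closed-derivation lemma of Remark~\ref{cloder}, Lemmas~\ref{3.4}--\ref{3.5}, and analytic continuation.
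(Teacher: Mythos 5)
Your argument is essentially the paper's own proof: positivity of the unital completely positive map plus Remark \ref{half} gives the half-plane statement, and membership of the subordination function in $\frk M_n(A)$ is obtained, exactly as in the paper, by expanding the resolvent in a Neumann series high up on the imaginary axis, pushing the estimate through the intertwining with Lemma \ref{3.5}, inverting in the noncommutative power-series algebra, applying Lemma \ref{3.4} to an element of $\Ker\overline\partial_{X:A}$, and finishing by analytic continuation (the paper reduces to $n=1$ and quotes the corepresentation characterization of Remark \ref{coreps}, while you stay matricial and rederive that identity via the Leibniz rule, a cosmetic difference). One small repair: merely taking $r_0>R_0$ and shrinking $\Omega$ does not make the tail of the Neumann series smaller than $\tfrac{1}{2r_0}$ in $\vert\cdot\vert_\rho$; you need $r_0$ sufficiently large relative to $R_0$ (e.g.\ $r_0>3R_0$, cf.\ the paper's choice $\rho=6(\norm X+\norm Y+1)$), and likewise take $\rho>\norm X$ strictly, both harmless adjustments.
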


\begin{proof}
By replacing $(K,L,A,B,X,Y,\Psi)$ with $(\frk M_n(K),\frk M_n(L), \frk M_n(A), \frk M_n(B), X \otimes I_n, Y \otimes I_n, \frk M_n(\Psi))$, we may assume without loss of generality that $n=1$.  Let $\overline\partial_{X:A}$ and $\overline\partial_{Y:B}$ denote the closures of $\partial_{X:A}$ and $\partial_{Y:B}$.  We have $\Psi( \frk D(\overline \partial_{Y:B})) \subset \frk D(\overline \partial_{X:A})$ and 
\begin{equation*}
 (\Psi \otimes \Psi) \circ \overline\partial_{Y:B} = \overline\partial_{X:A} \circ \Psi|_{B\langle Y \rangle}
\end{equation*}
For $\beta \in \mb H_+(B)$,  
\begin{equation*}
(\beta - Y)^{-1} \in \frk D(\overline \partial_{Y:B})
\end{equation*}
by Proposition \ref{cloder}, and so $(\beta - Y)^{-1}$ is a corepresentation of the coalgebra $(\frk D(\overline \partial_{Y:B}),\overline \partial_{Y:B})$ by (\ref{coreps}).  Therefore $\gamma = \Psi((\beta - Y)^{-1})$ is a corepresentation of $(\frk D(\overline \partial_{X:A}),\overline \partial_{X:A})$.  Since $(\beta - Y)^{-1} \in \mb H_-(L)$ and $\Psi$ is positive and unital,  we have $\gamma \in \mb H_-(K)$ .  In particular $\gamma$ is invertible.  Note that since 
\begin{equation*}
\partial_{X:A}(a^*) = \sigma_{12}((\partial_{X:A}(a))^*) 
\end{equation*}
where $\sigma_{12}$ is the automorphism of $M \otimes M$ defined by $\sigma_{12}(m_1 \otimes m_2) = m_2 \otimes m_1$, $\frk D(\overline \partial_{X:A})$ is a $*$-algebra.  Hence $\gamma^{-1} \in \frk D(\overline\partial_{X:A})$ by Proposition \ref{cloder}.  By (\ref{coreps}),
\begin{equation*}
 \gamma^{-1} = \eta - X
\end{equation*}
for some $\eta \in \Ker \overline \partial_{X:A}$.  Since $\gamma^{-1} \in \mb H_+(K)$, we have $\eta \in \mb H_+(K)$.   Clearly the map taking $\beta \in \mb H_+(B)$ to $\eta \in \mb H_+(K)$ is a holomorphic map, it remains only to show that $\eta \in A$.  By analytic continuation, it suffices to show this for $\beta$ in an open subset of $\mb H_+(B)$.

Let $\rho = 6(\norm X + \norm Y+1)$, and put
\begin{equation*}
 \omega = \{\beta \in B \vert \norm{i\rho - \beta} < 1 \} \subset \mb H_+(B)
\end{equation*}
If $\beta \in \omega$, then
\begin{equation*}
 (\beta - Y)^{-1} = (i\rho(1 - \Gamma))^{-1} = (i\rho)^{-1}\sum_{m \geq 0} \Gamma^m
\end{equation*}
where
\begin{equation*}
 \Gamma = (i\rho)^{-1}(i\rho - \beta + Y)
\end{equation*}
Note that $\norm{\Gamma} < 1/6$.  

Let $\vprojnorm_{(p)}$ denote the projective tensor product norm on $K^{\widehat \otimes p}$.  Define $\varphi_j:K \to K^{\widehat \otimes (p+1)}$ by $\varphi_j(k) = 1^{\otimes (j-1)} \otimes k \otimes 1^{\otimes (p+1)-j}$.  Then since $\partial_{Y:B}\Gamma = (i\rho)^{-1}1 \otimes 1$, it follows easily that
\begin{equation*}
 \partial_{Y:B}^{(p)}\Gamma^m = \sum_{\substack{m_1 \geq 0,\dotsc, m_{p+1} \geq 0\\m_1 + \dotsb m_{p+1} = m-p}} (i\rho)^{-p}\varphi_1(\Gamma^{m_1})\dotsb\varphi_{p+1}(\Gamma^{m_{p+1}})
\end{equation*}
From this it follows that
\begin{equation*}
\projnorm{\partial_{Y:B}^{(p)}\Gamma^m}_{(p+1)} < \rho^{-p}6^{-(m-p)} \frac{m!}{p!(m-p)!}
\end{equation*}
if $m \geq p$, while if $m < p$ then
\begin{equation*}
 \partial_{Y:B}^{(p)}\Gamma^m = 0
\end{equation*}
Let $P_m = \Psi(\Gamma^m)$.  Then $P_m \in A \langle X \rangle $ and 
\begin{equation*}
 \partial_{X:A}^{(p)}P_m = \Psi^{\otimes (p+1)}(\partial_{Y:B}^{(p)}\Gamma^m)
\end{equation*}
Hence
\begin{equation*}
\projnorm{\partial_{X:A}^{(p)}P_m}_{(p+1)} \leq \rho^{-p}6^{-(m-p)}\frac{m!}{p!(m-p)!}
\end{equation*}
if $m \geq p$ and is zero if $m < p$.  Let $h_m \in A\langle t\rangle$ so that $P_m = h_m(X)$.  By Lemma \ref{3.5},
\begin{align*}
\vert h_m \vert_r &\leq \sum_{p \geq 0} \projnorm{\partial_{X:A}^{(p)}P_m}_{(p+1)}(\norm{X} + r)^p\\ 
&< \sum_{0 \leq p \leq m} \rho^{-p} 6^{-(m-p)}(\norm{X} + r)^p \frac{m!}{p!(m-p)!}\\
&= (\rho^{-1}(\norm X + r) + 6^{-1})^m
\end{align*}
Let $r = \norm{X}+1$, so that
\begin{equation*}
 \vert h_m \vert_r < (1/2)^m
\end{equation*}
if $m \geq 1$.  Then $h = \sum_{m \geq 1} h_m \in A_r\{t\}$ and $\vert h\vert_r < 1$.  It follows that $1 + h$ is invertible in $A_r\{t\}$.  We then have
\begin{align*}
 \eta - X &= (\Psi(\beta - Y)^{-1})^{-1}\\
&= (i\rho)\left(1 + \sum_{k \geq 1} P_k\right)^{-1}\\
&= (i\rho)(1 + h)^{-1}(X)
\end{align*}
Hence $\overline \partial_{X:A} \eta = 0$ and $\eta = g(X)$ where $g = t + (i\rho)(1 + h)^{-1} \in A_r\{t\}$ and $r = \norm X + 1$.  By Lemma \ref{3.4}, $\eta \in A$.
\end{proof}

\begin{cor}\label{subcor}
Let $(M,\tau)$ be a von Neumann algebra with faithful normal trace state, and $1 \in B \subset M$ a W$^*$-subalgebra.  Suppose $X = X^* \in M$ and that $p \in M$ is a projection which is $B$-free with $X$ and such that $p$ is independent from $B$ with respect to $\tau$.  Let $\alpha$ denote $\tau(p)$, and put $X_p = \alpha^{-1}pXp$.  Assume that $|\mc J(X:B)|_2 < \infty$.  Then there is an analytic function $F_n:\mb H_+(\frk M_n(B)) \to \mb H_+(\frk M_n(B))$ such that
\begin{equation*}
\alpha^{-1}E^{(\frk M_n(M))}_{\frk M_n(B\langle X\rangle)}\left(X_p\otimes I_n - \beta(p \otimes I_n)\right)^{-1} = (X\otimes I_n - F_n(\beta))^{-1}
\end{equation*}
for $\beta \in \mb H_+(\frk M_n(B))$.
\end{cor}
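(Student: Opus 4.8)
This corollary follows by applying Theorem~\ref{subord} to the completely positive coalgebra morphism $\Psi$ of Proposition~\ref{expmorph}, and then identifying the compressed coefficient algebra $Bp$ with $B$. As in the rest of Section~3 I take $p$ to commute with $B$ and $X$ to be algebraically free from $B[p]$ (the latter is implicit already in $\mc J(X:B)$ being defined, and both may be arranged by passing to a larger trace space). The first point to check is that the two free difference quotients appearing here are closable. By hypothesis $\vert\mc J(X:B)\vert_2 < \infty$, so $\partial_{X:B}$ is closable. By the Proposition of Remark~\ref{conjugate}, $\mc J(X_p:Bp)$ exists and equals $E^{(pMp)}_{Bp\langle X_p\rangle}(p\mc J(X:B)p)$; since $\norm{p\mc J(X:B)p}_{L^2(M,\tau)} \leq \vert\mc J(X:B)\vert_2 < \infty$, since $\vnorm$ on $L^2(pMp,\tau_p)$ is $\alpha^{-1/2}$ times $\vnorm$ on $L^2(M,\tau)$ for elements of $pMp$, and since a conditional expectation is an $L^2$-contraction, we get $\vert\mc J(X_p:Bp)\vert_2 < \infty$, whence $\partial_{X_p:Bp}$ is closable as well (and closability in $\vnorm$ forces closability in the $C^*$-norm, since norm convergence implies $\vnorm$-convergence).

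Next, with $\psi(pmp) = \alpha^{-1}pmp$ and $\Psi = E^{(M)}_{B\langle X\rangle}\circ\psi:pMp \to M$ as before: by Proposition~\ref{expmorph}, $\Psi(Bp\langle X_p\rangle) \subset B\langle X\rangle$ and $(\Psi\otimes\Psi)\circ\partial_{X_p:Bp} = \partial_{X:B}\circ\Psi|_{Bp\langle X_p\rangle}$; by the Proposition following it, $\Psi$ is unital; and $\Psi$ is completely positive, being the composition of the conditional expectation $E^{(M)}_{B\langle X\rangle}$ with $\psi$, which is a positive scalar multiple of the inclusion $pMp \hookrightarrow M$.

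Then I would apply Theorem~\ref{subord}, in the notation of its statement, with $K = M$, $A = B$ (a norm-closed $C^*$-subalgebra, being a von Neumann algebra), $L = pMp$, $B = Bp$, the given $X$, and $Y = X_p$. Here $Bp$ is a unital $C^*$-subalgebra of $pMp$ with unit $p$: the map $b \mapsto bp$ is a $*$-homomorphism $B \to pMp$, and it is injective since $\tau$ is faithful and $p$ is independent from $B$ (if $bp = 0$ then $\alpha\tau(b^*b) = \tau(b^*bp) = 0$), hence isometric; and $X_p$ is algebraically free from $Bp$ in $pMp$, as already used in Lemma~\ref{cmorph}. The remaining hypotheses of Theorem~\ref{subord} were verified above, so the theorem produces a holomorphic map $\widetilde F_n : \mb H_+(\frk M_n(Bp)) \to \mb H_+(\frk M_n(B))$ with
\begin{equation*}
 \frk M_n(\Psi)\big((X_p\otimes I_n - \widetilde\beta)^{-1}\big) = \big(X\otimes I_n - \widetilde F_n(\widetilde\beta)\big)^{-1}, \qquad \widetilde\beta \in \mb H_+(\frk M_n(Bp)).
\end{equation*}

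Finally, I would transport this along the isomorphism $\frk M_n(B) \cong \frk M_n(Bp)$ sending $\beta = (b_{ij})$ to $(b_{ij}p) =: \beta(p\otimes I_n)$, which restricts to a bijection of $\mb H_+(\frk M_n(B))$ onto $\mb H_+(\frk M_n(Bp))$, and define $F_n(\beta) := \widetilde F_n(\beta(p\otimes I_n))$, a holomorphic map $\mb H_+(\frk M_n(B)) \to \mb H_+(\frk M_n(B))$. For $\beta \in \mb H_+(\frk M_n(B))$ the element $X_p\otimes I_n - \beta(p\otimes I_n)$ lies in $\mb H_-(\frk M_n(pMp))$ and so is invertible there; applying $\frk M_n(\psi)$ to its inverse multiplies it by $\alpha^{-1}$ and regards it inside $\frk M_n(M)$, and then $\frk M_n(E^{(M)}_{B\langle X\rangle}) = E^{(\frk M_n(M))}_{\frk M_n(B\langle X\rangle)}$ turns the left side of the displayed formula into $\alpha^{-1}E^{(\frk M_n(M))}_{\frk M_n(B\langle X\rangle)}\big(X_p\otimes I_n - \beta(p\otimes I_n)\big)^{-1}$, which is the left side of the assertion; the right side equals $(X\otimes I_n - F_n(\beta))^{-1}$ by construction. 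The only step that is not pure bookkeeping is the closability of $\partial_{X_p:Bp}$, handled by Remark~\ref{conjugate} together with the elementary $L^2$-estimate above; the rest consists of quoting Proposition~\ref{expmorph}, the unitality and positivity of $\Psi$, and Theorem~\ref{subord}, and translating between $Bp$ and $B$.
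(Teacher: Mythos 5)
Your proposal is correct and follows essentially the same route as the paper: closability of $\partial_{X_p:Bp}$ via the Proposition of Remark \ref{conjugate} together with $|\mc J(X:B)|_2 < \infty$, then Proposition \ref{expmorph} to see $\Psi$ is a unital completely positive coalgebra morphism, then Theorem \ref{subord} with $K=M$, $L=pMp$, $A=B$, $B=Bp$, $Y=X_p$. The only difference is that you spell out the $L^2$-estimate, the identification $\frk M_n(B) \cong \frk M_n(Bp)$, and the unraveling of $\frk M_n(\Psi)$, which the paper leaves implicit.
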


\begin{proof}
By Proposition \ref{conjugate}, also $|\mc J(X_p:Bp)|_2 < \infty$, hence $\partial_{X:B}$ and $\partial_{X_p:Bp}$ are closable in norm.  By Proposition \ref{expmorph}, Theorem \ref{subord} applies to $K = M$, $L = pMp$, $A = B$, $B = Bp$, $X = X$, $Y = X_p$, $\Psi = \Psi$ which gives the result.
\end{proof}

\begin{rmk}\label{diagcor}
In $B$-valued free probability, it is useful also to consider matricial resolvents $(X \otimes I_n - \beta)^{-1}$ where $\beta \in \Delta_+\frk M_n(B)$ (see \cite{bvalued}).  The subordination extends also to these resolvents.
\begin{cor*}
Let $K$ and $L$ be unital C$^*$-algebras, and $1 \in A \subset K$, $1 \in B \subset L$ C$^*$-subalgebras.  Let $X = X^* \in K$ algebraically free from $A$, $Y = Y^* \in L$ algebraically free from $B$. Suppose $\Psi:L \to K$ is a unital,  completely positive linear map such that $\Psi(B\langle Y \rangle) \subset A \langle X \rangle $ and
\begin{equation*}
(\Psi \otimes \Psi) \circ \partial_{Y:B} = \partial_{X:A} \circ \Psi|_{B\langle Y \rangle}
\end{equation*}
Suppose also that $\partial_{X:A}$ and $\partial_{Y:B}$ are closable.  Then there is a holomorphic map $\Phi_n: \Delta_+\frk M_n(B) \to \Delta_+\frk M_n(A)$ such that
\begin{equation*}
 \frk M_n(\Psi)\left((Y\otimes I_n - \beta)^{-1}\right) = (X\otimes I_n - \Phi_n(\beta))^{-1}
\end{equation*}
for $\beta \in \Delta_+\frk M_n(B)$.
\end{cor*}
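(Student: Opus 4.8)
The plan is to bootstrap from Theorem~\ref{subord} rather than repeat its analytic core. The key observation is that $\frk M_n(\Psi)$ acts entrywise and therefore preserves lower triangularity, so the resolvent $(\beta - Y\otimes I_n)^{-1}$ and its $\frk M_n(\Psi)$-image remain invertible for \emph{every} $\beta \in \Delta_+\frk M_n(B)$, not just for $\beta$ with $\operatorname{Im}\beta$ bounded below. This lets one write down a holomorphic candidate $\Phi_n$ on all of $\Delta_+\frk M_n(B)$ directly, after which the fact that $\Phi_n$ actually lands in $\frk M_n(A)$ follows for free by analytic continuation from the region where $\Delta_+$ and $\mb H_+$ overlap.

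Concretely, I would fix $\beta = (\beta_{ij}) \in \Delta_+\frk M_n(B)$ and note that, since $Y\otimes I_n$ is diagonal with self-adjoint diagonal, $\beta - Y\otimes I_n$ is again lower triangular with diagonal entries $\beta_{ii} - Y \in \mb H_+(L)$; by Remark~\ref{half} it is invertible, $(\beta - Y\otimes I_n)^{-1} \in \Delta_-\frk M_n(L)$, and its $(i,i)$ entry is $(\beta_{ii}-Y)^{-1} \in \mb H_-(L)$. Applying $\frk M_n(\Psi)$ entrywise, $\gamma := \frk M_n(\Psi)((\beta - Y\otimes I_n)^{-1})$ is lower triangular, and since $\Psi$ is unital and positive its diagonal entries $\Psi((\beta_{ii}-Y)^{-1})$ lie in $\mb H_-(K)$; hence $\gamma \in \Delta_-\frk M_n(K)$ is invertible with $\gamma^{-1} \in \Delta_+\frk M_n(K)$ and $(\gamma^{-1})_{ii} \in \mb H_+(K)$. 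Setting $\Phi_n(\beta) := X\otimes I_n + \gamma^{-1}$, the fact that $\gamma^{-1}$ is lower triangular while $X\otimes I_n$ is diagonal and self-adjoint gives $\Phi_n(\beta) \in \Delta_+\frk M_n(K)$ and $X\otimes I_n - \Phi_n(\beta) = -\gamma^{-1}$, so that
\begin{equation*}
(X\otimes I_n - \Phi_n(\beta))^{-1} = -\gamma = \frk M_n(\Psi)((Y\otimes I_n - \beta)^{-1}).
\end{equation*}
Since $\Psi$ is bounded, $\beta \mapsto \Phi_n(\beta)$ is holomorphic on $\Delta_+\frk M_n(B)$, being a composition of the resolvent map, $\frk M_n(\Psi)$, inversion, and a translation.

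It remains to show $\Phi_n$ takes values in $\frk M_n(A)$, which upgrades the target to $\Delta_+\frk M_n(A)$ (the diagonal entries of $\Phi_n(\beta)$, being in $A$ and in $\mb H_+(K)$, automatically lie in $\mb H_+(A)$). The set $U = \Delta_+\frk M_n(B) \cap \mb H_+(\frk M_n(B))$ is a nonempty open subset of $\Delta_+\frk M_n(B)$ (it contains $i\rho I_n$ for every $\rho > 0$), and on $U$ Theorem~\ref{subord} applies and gives $\frk M_n(\Psi)((Y\otimes I_n - \beta)^{-1}) = (X\otimes I_n - F_n(\beta))^{-1}$ with $F_n(\beta) \in \mb H_+(\frk M_n(A))$; comparison with the displayed identity forces $\Phi_n = F_n$ on $U$, so $\Phi_n(U) \subset \frk M_n(A)$. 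Since $\Delta_+\frk M_n(B)$ is convex, hence connected, and $\frk M_n(A)$ is a norm-closed subspace of $\frk M_n(K)$, pairing $\Phi_n$ with bounded linear functionals annihilating $\frk M_n(A)$ and invoking the identity theorem (exactly as in the analytic-continuation step of the proof of Theorem~\ref{subord}) shows $\Phi_n(\beta) \in \frk M_n(A)$ for all $\beta \in \Delta_+\frk M_n(B)$, which completes the proof.

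The one step I expect to require care is the middle paragraph: one must check that the lower-triangular structure of $(\beta - Y\otimes I_n)^{-1}$ genuinely survives $\frk M_n(\Psi)$ and that positivity of $\Psi$ keeps the diagonal of $\gamma$ inside $\mb H_-(K)$, since this is precisely what makes $\Phi_n$ well defined and holomorphic on the whole of $\Delta_+\frk M_n(B)$ and not merely on $\mb H_+(\frk M_n(B))$. Everything genuinely delicate — closability of the derivations and the noncommutative power series estimates — is already absorbed into Theorem~\ref{subord}, so no new analytic difficulty arises.
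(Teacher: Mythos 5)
Your argument is correct, and its backbone --- $(\beta - Y\otimes I_n)^{-1}$ stays lower triangular with diagonal entries controlled by Remark \ref{half}, $\frk M_n(\Psi)$ acts entrywise and, being unital and positive, keeps the diagonal in the appropriate half-plane, and the diagonal of the inverse of a triangular matrix is the inverse of the diagonal --- is exactly the backbone of the paper's proof. Where you diverge is in how membership in $\frk M_n(A)$ is obtained. The paper applies Theorem \ref{subord} with $n=1$ to each diagonal entry, writing $\gamma_{ii} = \Psi\left((Y-b_{ii})^{-1}\right) = (X-\eta_{ii})^{-1}$ with $\eta_{ii}\in\mb H_+(A)$, and then reads off the diagonal of $\gamma^{-1}$; this is entirely local in $\beta$, avoids any further continuation argument, and shows in passing that the $i$-th diagonal entry of $\Phi_n(\beta)$ depends only on $b_{ii}$ --- but as written it only pins the diagonal entries inside $A$, the off-diagonal entries of $\gamma^{-1}-X\otimes I_n$ being a priori only in the C$^*$-algebra generated by $A$ and $X$ unless one reruns the corepresentation and power-series argument at the matrix level. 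You instead invoke Theorem \ref{subord} at level $n$ on the overlap $U = \Delta_+\frk M_n(B)\cap\mb H_+(\frk M_n(B))$ (nonempty and open, e.g.\ containing $i\rho I_n$), identify $\Phi_n$ with $F_n$ there by uniqueness of the matrix whose resolvent is prescribed, and propagate $\frk M_n(A)$-valuedness to the convex, hence connected, domain $\Delta_+\frk M_n(B)$ by composing with bounded functionals annihilating the closed subspace $\frk M_n(A)$ and applying the identity theorem; this treats all entries, including the off-diagonal ones, in one stroke, and the continuation device is the same one already used inside the proof of Theorem \ref{subord}, so no new analytic input is needed. Both routes are sound; yours is somewhat more explicit about the off-diagonal entries, while the paper's entrywise use of the $n=1$ case gives finer information about the diagonal and dispenses with connectivity considerations.
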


\end{rmk}

\begin{proof}
Let $\beta = (b_{ij})_{1 \leq i,j \leq n}$, and $\gamma = (\gamma_{ij})_{1 \leq i,j \leq n}$ where 
\begin{equation*}
 \gamma = \frk M_n(\Psi)\left((Y\otimes I_n - \beta)^{-1}\right)
\end{equation*}
Then since $\beta \in \Delta_+ \frk M_n(B)$ we have $\gamma \in \Delta_+ \frk M_n(A \langle X \rangle)$ and
\begin{equation*}
 \gamma_{ii} = \Psi\left((Y - b_{ii})^{-1}\right)
\end{equation*}
By Theorem \ref{subord}, $\gamma_{ii} = (X - \eta_{ii})^{-1}$ for some $\eta_{ii} \in \mb H_+(A)$.  Since $\gamma \in \Delta_+\frk M_n(A \langle X \rangle)$, $\gamma^{-1} \in \Delta_-\frk M_n(A \langle X \rangle)$ and
\begin{equation*}
 (\gamma^{-1})_{ii} = (\gamma_{ii})^{-1} = X - \eta_{ii}
\end{equation*}
so that $\gamma^{-1} - X\otimes I_n = \eta$ for some $\eta \in \Delta_-\frk M_n(A)$.  The analytic dependence on $\beta$ is clear, so this completes the proof.
\end{proof}

\begin{cor}
Let $(M,\tau)$ be a von Neumann algebra with faithful normal trace state, and $1 \in B \subset M$ a W$^*$-subalgebra.  Suppose $X = X^* \in M$ and that $p \in M$ is a projection which is $B$-free with $X$ and such that $p$ is independent from $B$ with respect to $\tau$.  Let $\alpha$ denote $\tau(p)$, and put $X_p = \alpha^{-1}pXp$.  Assume that $|\mc J(X:B)|_2 < \infty$.  Then there is an analytic function $\Phi_n:\Delta_+\frk M_n(B) \to \Delta_+\frk M_n(B)$ such that
\begin{equation*}
\alpha^{-1}E^{(\frk M_n(M))}_{\frk M_n(B\langle X\rangle)}\left(X_p\otimes I_n - \beta(p \otimes I_n)\right)^{-1} = (X\otimes I_n - F_n(\beta))^{-1}
\end{equation*}
for $\beta \in \Delta_+\frk M_n(B)$. \qed
\end{cor}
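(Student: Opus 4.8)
The plan is to repeat, essentially verbatim, the argument proving Corollary~\ref{subcor}, substituting the corollary of Remark~\ref{diagcor} for Theorem~\ref{subord}. First, by Proposition~\ref{conjugate} the hypothesis $|\mc J(X:B)|_2 < \infty$ gives $|\mc J(X_p:Bp)|_2 < \infty$, whence, by the conjugate-variable remark in the preliminaries, both $\partial_{X:B}$ and $\partial_{X_p:Bp}$ are closable in $\vnorm$; this supplies the closability input required below.

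Next, Proposition~\ref{expmorph} furnishes the rescaled conditional expectation $\Psi = E^{(M)}_{B\langle X\rangle}\circ\psi : pMp \to M$. It is unital (the unnumbered proposition following Proposition~\ref{expmorph} shows $\Psi(bp)=b$ for $b\in B$) and completely positive, being the composition of $\psi(pmp)=\alpha^{-1}pmp$ with a conditional expectation; it satisfies $\Psi(Bp\langle X_p\rangle)\subset B\langle X\rangle$ and intertwines $\partial_{X_p:Bp}$ with $\partial_{X:B}$. Applying the corollary of Remark~\ref{diagcor} with $K=M$, $L=pMp$, $A=B$, the coefficient algebra of $L$ taken to be $Bp$ (with unit $p$), $X=X$, $Y=X_p$, and this $\Psi$, one obtains a holomorphic map $\Phi:\Delta_+\frk M_n(Bp)\to\Delta_+\frk M_n(B)$ with
\[
 \frk M_n(\Psi)\bigl((X_p\otimes I_n-\beta')^{-1}\bigr)=(X\otimes I_n-\Phi(\beta'))^{-1}
\]
for every $\beta'\in\Delta_+\frk M_n(Bp)$.

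It remains to rewrite this in the stated form. For $\beta=(b_{ij})\in\Delta_+\frk M_n(B)$ the element $\beta(p\otimes I_n)$ has $(i,j)$-entry $b_{ij}p$, so it is lower triangular; and since $p$ commutes with $B$ we get $\text{Im}(b_{ii}p)=(\text{Im}\,b_{ii})p\geq\epsilon p$ whenever $\text{Im}\,b_{ii}\geq\epsilon 1$, so, $p\otimes I_n$ being the unit of $\frk M_n(Bp)$, we have $\beta(p\otimes I_n)\in\Delta_+\frk M_n(Bp)$; the assignment is clearly holomorphic. Hence $F_n(\beta):=\Phi(\beta(p\otimes I_n))$ is a holomorphic map $\Delta_+\frk M_n(B)\to\Delta_+\frk M_n(B)$. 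Finally, because $\Psi=E^{(M)}_{B\langle X\rangle}\circ\psi$ and $\psi$ is multiplication by $\alpha^{-1}$, the map $\frk M_n(\Psi)$ applied to $(X_p\otimes I_n-\beta(p\otimes I_n))^{-1}$ equals $\alpha^{-1}E^{(\frk M_n(M))}_{\frk M_n(B\langle X\rangle)}$ applied to the same resolvent, and substituting $\beta'=\beta(p\otimes I_n)$ in the displayed identity yields the claim.

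No essentially new difficulty arises here, since all of the analytic subordination content is already packaged into Proposition~\ref{expmorph} together with the corollary of Remark~\ref{diagcor}. The one point deserving attention is the bookkeeping of the last paragraph: verifying that right multiplication by $p$ carries the $B$-valued triangular half-plane into the $Bp$-valued one, so that the domain of the resulting map is genuinely $\Delta_+\frk M_n(B)$, and tracking the scalar $\alpha^{-1}$ through $\psi$ so that $\frk M_n(\Psi)$ becomes $\alpha^{-1}$ times the conditional expectation onto $\frk M_n(B\langle X\rangle)$.
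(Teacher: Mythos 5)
Your proposal is correct and follows essentially the route the paper intends (and leaves implicit behind its \qed): use Proposition \ref{conjugate} for closability, Proposition \ref{expmorph} for the completely positive coalgebra morphism $\Psi$, and then invoke the corollary of Remark \ref{diagcor} in place of Theorem \ref{subord}, exactly as Corollary \ref{subcor} invokes Theorem \ref{subord}. The bookkeeping you spell out --- that $\beta(p\otimes I_n)\in\Delta_+\frk M_n(Bp)$ and that $\frk M_n(\Psi)$ acts as $\alpha^{-1}E^{(\frk M_n(M))}_{\frk M_n(B\langle X\rangle)}$ on $\frk M_n(pMp)$ --- is the same translation the paper relies on, so no new content or gap is involved.
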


\section{Free Markovianity for Free Compression}
\noindent We can now remove the condition on $|\mc J(X:B)|_2 < \infty$ from Corollary \ref{subcor}.  The key tool is the following ``Free Markovianity'' property of free compression.

\begin{prop}\label{markov}
Suppose that $1 \in B \subset M$ is a W$^*$-subalgebra, $X = X^* \in M$ and that $p \in M$ is a projection such that $p$ commutes with $B$, and $X$ and $p$ are $B$-freely independent.  Let $Y=Y^* \in M$ be $B$-free from $B\langle X,p\rangle$.  Then
\begin{equation*}
 E_{B\langle X\rangle}^{(M)}E_{B\langle X+Y\rangle}^{(M)}E^{(pMp)}_{Bp\langle p(X+Y)p\rangle} = E_{B\langle X\rangle}^{(M)}E^{(pMp)}_{Bp\langle p(X+Y)p\rangle}
\end{equation*}
\end{prop}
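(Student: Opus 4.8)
The plan is to collapse the asserted identity of three conditional expectations into a single orthogonality relation in $L^2(M,\tau)$, and then verify that relation using associativity of free independence over $B$ together with a standard fact about conditional expectations onto amalgamated free products. Throughout write $Z = X+Y$.

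First I would rewrite what is to be proved. Since the range of $E^{(pMp)}_{Bp\langle pZp\rangle}$ is $W^*(Bp,pZp)$, the claimed identity is equivalent to the assertion that $E^{(M)}_{B\langle X\rangle}\bigl(E^{(M)}_{B\langle Z\rangle}(\zeta)\bigr) = E^{(M)}_{B\langle X\rangle}(\zeta)$ for every $\zeta \in W^*(Bp,pZp)$, i.e.\ that $E^{(M)}_{B\langle Z\rangle}(\zeta) - \zeta$ is $\tau$-orthogonal to $W^*(B,X)$. Using $\tau$-invariance and $*$-invariance of $E^{(M)}_{B\langle Z\rangle}$ one has $\tau\bigl(E^{(M)}_{B\langle Z\rangle}(\zeta)\,a\bigr) = \tau\bigl(\zeta\,E^{(M)}_{B\langle Z\rangle}(a)\bigr)$ for $a \in W^*(B,X)$, so this orthogonality is equivalent to $\tau(\zeta\,a^\circ) = 0$ for all $a \in W^*(B,X)$, where $a^\circ := a - E^{(M)}_{B\langle Z\rangle}(a)$. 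Finally, since $Bp \subseteq W^*(B,p)$ and $pZp \in W^*(B,p,Z)$ we have $W^*(Bp,pZp) \subseteq W^*(B,p,Z)$, so it suffices to prove $\tau(\eta\,a^\circ) = 0$ for every $\eta \in W^*(B,p,Z)$ and every $a \in W^*(B,X)$; that is, $E^{(M)}_{B\langle p,Z\rangle}(a^\circ) = 0$, where $E^{(M)}_{B\langle p,Z\rangle}$ is the $\tau$-preserving conditional expectation onto $W^*(B,p,Z)$.

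Next I would record the freeness available. By hypothesis $W^*(B,X)$ and $W^*(B,p)$ are $B$-free and $W^*(B,Y)$ is $B$-free from $W^*(B,X,p)$; by associativity of the amalgamated free product over $B$, the algebras $W^*(B,X)$, $W^*(B,Y)$, $W^*(B,p)$ are jointly $B$-free, so in particular $W^*(B,p)$ is $B$-free from $W^*(B,X,Y)$, hence from $W^*(B,Z)$, and $W^*(B,p,Z) = W^*(B,Z) *_B W^*(B,p)$. Also $a^\circ \in W^*(B,X,Y)$ and $E^{(M)}_{B\langle Z\rangle}(a^\circ) = 0$. Now invoke the general fact: if $1 \in B \subseteq Q \subseteq S$ and $1 \in B \subseteq R$ are $W^*$-subalgebras of $(M,\tau)$ with $R$ and $S$ free over $B$, then $E^{(M)}_{Q \vee R}$ restricted to $S$ equals $E^{(M)}_Q$ restricted to $S$. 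This is immediate from the orthogonal decomposition of $L^2(S *_B R)$: for $s \in S$ the vector $s - E^{(M)}_Q(s)$ lies in $L^2(S) \ominus L^2(B)$, is orthogonal to $L^2(Q)$, and — by freeness of $R$ and $S$ over $B$ — is orthogonal to every word in $L^2(Q \vee R)$ involving a letter from $R$; hence it is orthogonal to all of $L^2(Q \vee R)$. Applying this with $Q = W^*(B,Z)$, $R = W^*(B,p)$, $S = W^*(B,X,Y)$ gives $E^{(M)}_{B\langle p,Z\rangle}(a^\circ) = E^{(M)}_{B\langle Z\rangle}(a^\circ) = 0$, which is what remained.

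The bookkeeping in the reduction — keeping straight which conditional expectation acts on which algebra, and the $L^2$-orthogonality manipulations — is fussy but routine. The genuine content is the passage from the stated hypotheses to joint $B$-freeness of $W^*(B,X)$, $W^*(B,Y)$, $W^*(B,p)$ and the fact about $E^{(M)}_{Q \vee R}$; I expect this last fact, though standard, to be the step that deserves a careful statement and proof rather than to be taken for granted.
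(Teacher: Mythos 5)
Your argument is correct, but it takes a genuinely more self-contained route than the paper. The paper makes the same first reduction you do only implicitly, via the inclusion $Bp\langle p(X+Y)p\rangle \subset B\langle X+Y,p\rangle$ (so that the inner compression can be replaced by $E^{(M)}_{B\langle X+Y,p\rangle}$), and then simply quotes Voiculescu's free-Markovianity machinery: \cite[Lemma 3.3]{mutual} shows that $W^*(B\langle X\rangle)$, $W^*(B\langle X+Y\rangle)$, $W^*(B\langle X+Y,p\rangle)$ form a freely Markovian triple, and \cite[Lemma 3.7]{mutual} converts this into the identity $E_{B\langle X\rangle}E_{B\langle X+Y\rangle}E_{B\langle X+Y,p\rangle}=E_{B\langle X\rangle}E_{B\langle X+Y,p\rangle}$. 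You instead reprove the special case needed: your key fact, that $E^{(M)}_{Q\vee R}\vert_S=E^{(M)}_Q\vert_S$ when $B\subset Q\subset S$ and $R$ is $B$-free from $S$, applied with $Q=W^*(B\langle Z\rangle)$, $S=W^*(B\langle X,Y\rangle)$, $R=W^*(B\langle p\rangle)$, is precisely the content extracted from those two cited lemmas (free Markovianity of the triple amounts to $S$ and $Q\vee R$ being free with amalgamation over $Q$), and your passage from the hypotheses to joint $B$-freeness of $W^*(B\langle X\rangle)$, $W^*(B\langle Y\rangle)$, $W^*(B\langle p\rangle)$ is the regrouping the paper gets out of Lemma 3.3. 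So your approach buys self-containedness at the cost of proving that lemma, while the paper buys brevity by citation; both are valid. One caution on your sketch of the key lemma: orthogonality of $s-E^{(M)}_Q(s)$ to words containing $R$-letters does not follow from $B$-centeredness and $B$-freeness alone, because when you center the interior $Q$-letters their $B$-components make adjacent centered $R$-letters merge, and the resulting reductions eventually collapse to pure $Q$-words; the induction must therefore carry the stronger invariant that the $S$-letter stays $Q$-centered (which it does, since multiplication by elements of $Q$, in particular of $B$, preserves $\Ker E^{(M)}_Q$). With that bookkeeping made explicit the argument closes, exactly as you anticipated when you flagged this step as the one deserving a careful proof.
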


\begin{proof}
Apply \cite[Lemma 3.3]{mutual} to
\begin{align*}
D &= B, & B = W^*(B\langle X+Y\rangle)\\
A_1 &= W^*(B\langle X,Y\rangle)\\
A &= W^*(B\langle X\rangle), &\Omega = \{p\}\\
C &= W^*(B\langle X+Y,p\rangle)
\end{align*}
to conclude that $W^*(B\langle X\rangle)$, $W^*(B\langle X+Y\rangle)$, $W^*(B\langle X+Y,p\rangle)$ is freely Markovian.  By \cite[Lemma 3.7]{mutual}
\begin{equation*}
E_{B\langle X\rangle}^{(M)}E_{B\langle X+Y\rangle}^{(M)}E^{(M)}_{B\langle X+Y,p\rangle} = E_{B\langle X\rangle}^{(M)}E^{(M)}_{B\langle X+Y,p\rangle}
\end{equation*}
Since $Bp\langle p(X+Y)p\rangle \subset B\langle X+Y,p\rangle$, 
\begin{equation*}
E^{(M)}_{B\langle X+Y,p\rangle}E^{(pMp)}_{Bp\langle p(X+Y)p\rangle} = E^{(pMp)}_{Bp\langle p(X+Y)p\rangle}
\end{equation*}
from which the result follows.
\end{proof}

\noindent We are now prepared to remove the condition on $\mc J(X:B)$, this follows (\cite[Theorem 3.8]{coalgebra}).
\begin{thm}\label{mainthm}
Let $(M,\tau)$ be a von Neumann algebra with faithful normal trace state, and $1 \in B \subset M$ a W$^*$-subalgebra.  Suppose $X = X^* \in M$ and that $p \in M$ is a projection which is $B$-free with $X$ and such that $p$ is independent from $B$ with respect to $\tau$.  Let $\alpha$ denote $\tau(p)$, and put $X_p = \alpha^{-1}pXp$.  Then there is an analytic function $F_n:\mb H_+(\frk M_n(B)) \to \mb H_+(\frk M_n(B))$ such that
\begin{equation*}
\alpha^{-1}E^{(\frk M_n(M))}_{\frk M_n(B \langle X \rangle)}\left(X_p\otimes I_n - \beta(p \otimes I_n)\right)^{-1} = (X\otimes I_n - F_n(\beta))^{-1}
\end{equation*}
for $\beta \in \mb H_+(\frk M_n(B))$.
\end{thm}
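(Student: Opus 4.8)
The strategy is to regularize $X$ by adding a small free semicircular element $\varepsilon S$, apply Corollary \ref{subcor} to $X+\varepsilon S$, and then let $\varepsilon\to 0$, using the free Markovianity of Proposition \ref{markov} to untangle the conditional expectations and the operator-valued subordination for free additive convolution (the $B$-valued form of \cite[Theorem 3.8]{coalgebra}, which needs no finiteness hypothesis on $\mc J(X:B)$ and which the present theorem does not subsume, it being about free sums) to identify the limit. As in the proof of Theorem \ref{subord}, after replacing $(M,\tau,B,X,p)$ by $(\frk M_n(M),\tau\otimes\mathrm{tr}_n,\frk M_n(B),X\otimes I_n,p\otimes I_n)$ it suffices to treat the case $n=1$, all hypotheses on $p$ passing to $\frk M_n$; I describe the argument there. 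Enlarging $(M,\tau)$ if necessary, fix a semicircular $S=S^*$ free from $W^*(B\langle X,p\rangle)$: then $S$ is $B$-free from $B\langle X,p\rangle$, $|\mc J(S:B)|_2=|\mc J(S:\mb C)|_2<\infty$, hence $|\mc J(\varepsilon S:B)|_2<\infty$ and — $X$ and $\varepsilon S$ being $B$-free — $|\mc J(X+\varepsilon S:B)|_2<\infty$ for every $\varepsilon>0$; and by associativity of freeness with amalgamation over $B$, the projection $p$ is $B$-free from $W^*(B\langle X,S\rangle)$, in particular $B$-free with $X+\varepsilon S$, while still commuting with $B$ and being $\tau$-independent from $B$. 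Thus Corollary \ref{subcor}, applied with $X+\varepsilon S$ in place of $X$ and $X_{p,\varepsilon}:=\alpha^{-1}p(X+\varepsilon S)p$, yields a holomorphic $F^\varepsilon:\mb H_+(B)\to\mb H_+(B)$ with
\[
\alpha^{-1}E^{(M)}_{B\langle X+\varepsilon S\rangle}\bigl(X_{p,\varepsilon}-\beta p\bigr)^{-1}=\bigl((X+\varepsilon S)-F^\varepsilon(\beta)\bigr)^{-1}\qquad(\beta\in\mb H_+(B)).
\]

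Applying $E^{(M)}_{B\langle X\rangle}$ to both sides, and noting that $X_{p,\varepsilon}$ is a scalar multiple of $p(X+\varepsilon S)p$ and $\beta p\in Bp$, so that $(X_{p,\varepsilon}-\beta p)^{-1}\in W^*\!\bigl(Bp\langle p(X+\varepsilon S)p\rangle\bigr)$, Proposition \ref{markov} with $Y=\varepsilon S$ gives
\[
E^{(M)}_{B\langle X\rangle}E^{(M)}_{B\langle X+\varepsilon S\rangle}\bigl(X_{p,\varepsilon}-\beta p\bigr)^{-1}=E^{(M)}_{B\langle X\rangle}\bigl(X_{p,\varepsilon}-\beta p\bigr)^{-1}.
\]
Writing $R_\varepsilon(\beta):=\alpha^{-1}E^{(M)}_{B\langle X\rangle}(X_{p,\varepsilon}-\beta p)^{-1}$, this produces the key identity $R_\varepsilon(\beta)=E^{(M)}_{B\langle X\rangle}\bigl((X+\varepsilon S)-F^\varepsilon(\beta)\bigr)^{-1}$. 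Since $X$ and $\varepsilon S$ are $B$-free and $F^\varepsilon(\beta)\in\mb H_+(B)$, the operator-valued free-sum subordination shows this equals $\bigl(X-\omega^\varepsilon(F^\varepsilon(\beta))\bigr)^{-1}$ for a holomorphic $\omega^\varepsilon:\mb H_+(B)\to\mb H_+(B)$; thus $R_\varepsilon(\beta)$ is invertible and $X-R_\varepsilon(\beta)^{-1}=\omega^\varepsilon(F^\varepsilon(\beta))\in\mb H_+(B)$.

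It remains to let $\varepsilon\to0$. Let $\Psi=E^{(M)}_{B\langle X\rangle}\circ\psi$ be the map of Proposition \ref{expmorph}; it is completely positive and, because $E^{(M)}_{B\langle X\rangle}(p)=E^{(M)}_B(p)=\alpha$, unital, and $R_\varepsilon(\beta)=\Psi\bigl((X_{p,\varepsilon}-\beta p)^{-1}\bigr)$ for all $\varepsilon\ge0$. If $\varepsilon_0>0$ bounds $\text{Im}\,\beta$ from below, then by Remark \ref{half} the element $X_{p,\varepsilon}-\beta p$ lies in $\mb H_-(pMp)$, its inverse has norm $\le\varepsilon_0^{-1}$ and imaginary part $\ge\delta p$ with $\delta>0$ depending only on $\varepsilon_0,\norm X,\norm\beta$ (using $\norm{X+\varepsilon S}\le\norm X+1$ for $\varepsilon\le\norm S^{-1}$), hence uniformly in such $\varepsilon$; applying the unital positive map $\Psi$ shows $R_\varepsilon(\beta)\in\mb H_+(M)$ with $\text{Im}\,R_\varepsilon(\beta)\ge\delta 1$ and $\norm{R_\varepsilon(\beta)}\le\varepsilon_0^{-1}$, uniformly in $\varepsilon$, and by Remark \ref{half} again $-\text{Im}\,R_\varepsilon(\beta)^{-1}$ is bounded below by some $\delta'>0$ independent of $\varepsilon$. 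Since $X_{p,\varepsilon}\to X_p$ in norm we get $R_\varepsilon(\beta)\to R_0(\beta):=\alpha^{-1}E^{(M)}_{B\langle X\rangle}(X_p-\beta p)^{-1}$, and then $R_\varepsilon(\beta)^{-1}\to R_0(\beta)^{-1}$; consequently $X-R_0(\beta)^{-1}=\lim_{\varepsilon\to0}\bigl(X-R_\varepsilon(\beta)^{-1}\bigr)$ lies in the norm-closed $W^*(B)$ and has imaginary part $\ge\delta'1>0$, so $X-R_0(\beta)^{-1}\in\mb H_+(B)$. Returning to general $n$ and putting $F_n(\beta):=X\otimes I_n-R_0(\beta)^{-1}$ gives a map $F_n:\mb H_+(\frk M_n(B))\to\mb H_+(\frk M_n(B))$, holomorphic since $R_0$ is holomorphic and everywhere invertible (being $\Psi$ applied to an $\mb H_+(pMp)$-valued resolvent), with
\[
\alpha^{-1}E^{(\frk M_n(M))}_{\frk M_n(B\langle X\rangle)}\bigl(X_p\otimes I_n-\beta(p\otimes I_n)\bigr)^{-1}=R_0(\beta)=\bigl(X\otimes I_n-F_n(\beta)\bigr)^{-1}.
\]

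I expect the main obstacle to be exactly this limiting step: ensuring both that $p$ keeps its $B$-freeness and $\tau$-independence relative to $X+\varepsilon S$ — handled by associativity of amalgamated freeness — and, more delicately, that $\text{Im}\,R_\varepsilon(\beta)$ stays bounded below uniformly in $\varepsilon$, without which $F_n(\beta)$ would only be known to lie in the closure of the upper half-plane rather than in it. Both are controlled by the unitality and complete positivity of $\Psi$ and the quantitative estimates of Remark \ref{half}; the genuinely substantial inputs, Proposition \ref{markov} and the operator-valued free-sum subordination, are already at our disposal.
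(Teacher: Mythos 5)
Your argument is correct, but at the decisive step it takes a genuinely different route from the paper's. Both proofs regularize with a free semicircular $\varepsilon S$, apply Corollary \ref{subcor} to $X+\varepsilon S$, and use the free Markovianity of Proposition \ref{markov} to replace $E_{B\langle X\rangle}E_{B\langle X+\varepsilon S\rangle}$ by $E_{B\langle X\rangle}$ on the compressed resolvent. From there the paper stays elementary: since $\|\varepsilon S\|\to 0$ and the resolvents are uniformly bounded, $\|((X+\varepsilon S)\otimes I_n-\eta(\varepsilon))^{-1}-(X\otimes I_n-\eta(\varepsilon))^{-1}\|\to 0$, and because the latter already lies in $\frk M_n(W^*(B\langle X\rangle))$ the resolvent of $X+\varepsilon S$ is asymptotically its own conditional expectation; together with the uniform bounds on $\eta(\varepsilon)$ from Remark \ref{half} this forces $\eta(\varepsilon)$ to converge in norm to the desired $\eta\in\mb H_+(\frk M_n(B))$. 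You instead invoke the $B$-valued subordination theorem for free sums (\cite[Theorem 3.8]{coalgebra}, the version with no conjugate-variable hypothesis) to rewrite $E_{B\langle X\rangle}\bigl((X+\varepsilon S)-F^\varepsilon(\beta)\bigr)^{-1}$ exactly as $\bigl(X-\omega^\varepsilon(F^\varepsilon(\beta))\bigr)^{-1}$, so that for each $\varepsilon>0$ you have an exact identity placing $X-R_\varepsilon(\beta)^{-1}$ in $\mb H_+(B)$, and then you pass to the limit using the uniform estimates supplied by the unital positive map $\Psi$ (whose unitality uses $E^{(M)}_{B\langle X\rangle}(p)=\alpha$) and Remark \ref{half}. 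This is legitimate and non-circular, since that theorem concerns free sums rather than compressions, and it makes the identification of the limit cleaner; the cost is importing a substantially deeper external result, whereas the paper's route needs nothing beyond the resolvent identity and the half-plane estimates and is therefore self-contained. Two minor points: since Corollary \ref{subcor} is stated for all $n$, you could dispense with the amplification to $\frk M_n$ entirely (as the paper does, carrying $\otimes I_n$ through), which spares you from checking how the hypotheses and the regularization behave under amplification — your device of choosing the semicircular element only after amplifying does make your version work; and your uniform bounds on $R_\varepsilon(\beta)$ via positivity of $\Psi$ are essentially the paper's constants $C_1,C_2$, read off on the other side of the subordination identity.
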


\begin{proof}
The analytic dependence on $\beta$ is clear, so we must prove that
\begin{equation*}
  \alpha^{-1}E^{(\frk M_n(M))}_{\frk M_n(B \langle X \rangle)}(X_p \otimes I_n - \beta(p \otimes I_n))^{-1} = (X\otimes I_n - \eta)^{-1}
\end{equation*}
for some $\eta \in \mb H_+(\frk M_n(B))$.  Let $S$ be a $(0,1)$-semicircular element in $(M,\tau)$ which is freely independent from $B\langle X,p\rangle $.  Then $X, p$ and $S$ are $B$-free (\cite[Lemma 3.3]{mutual}).  Also $X + \epsilon S$ and $p$ are $B$-free and $|\mc J(X+\epsilon S:B)|_2 < \infty$ for $\epsilon > 0$ by \cite[Corollary 3.9]{fisher}.  So we can apply Corollary \ref{subcor} to $B,X+\epsilon S, p$, it follows that there are $\eta(\epsilon) \in \mb H_+(\frk M_n(B))$ for $0 < \epsilon \leq 1$ such that
\begin{equation*}
  \alpha^{-1}E^{(\frk M_n(M))}_{\frk M_n(B\langle X+\epsilon S\rangle )}(\alpha^{-1}p(X+\epsilon S)p \otimes I_n - \beta(p \otimes I_n))^{-1} = ((X+\epsilon S)\otimes I_n - \eta(\epsilon))^{-1}
\end{equation*}
Then
\begin{equation*}
\norm{((X+\epsilon S)\otimes I_n - \eta(\epsilon))^{-1}} \leq \norm{(\alpha^{-1}p(X+\epsilon S)p\otimes I_n - \beta(p \otimes I_n))^{-1}} \leq C_1
\end{equation*}
for some fixed constant $C_1$, and by (\ref{half}) also
\begin{equation*}
\text{Im } ((X+\epsilon S)\otimes I_n - \eta(\epsilon))^{-1} \geq C_2(1 \otimes I_n)
\end{equation*}
for some constant $C_2 > 0$.  By (\ref{half}), it follows that
\begin{align*}
 \norm{\eta(\epsilon)} &\leq C_3, & \text{Im }\eta(\epsilon) &\geq C_4(1 \otimes I_n)
\end{align*}
for some constants $C_3,C_4 > 0$.
It follows that
\begin{equation*}
\lim_{\epsilon \to 0} \norm{((X+\epsilon S)\otimes I_n - \eta(\epsilon))^{-1} - ((X\otimes I_n) - \eta(\epsilon))^{-1}} = 0
\end{equation*}
and hence
\begin{equation*}
\lim_{\epsilon \to 0} \norm{((X+\epsilon S)\otimes I_n - \eta(\epsilon))^{-1} - E_{\frk M_n(B\langle X \rangle)}^{\frk M_n(M))}((X+\epsilon S)\otimes I_n - \eta(\epsilon))^{-1}} = 0
\end{equation*}
By the previous proposition,
\begin{equation*}
E^{(M)}_{B \langle X \rangle}E^{(M)}_{B\langle X+\epsilon S\rangle }E^{(pMp)}_{Bp\langle \alpha^{-1}p(X+\epsilon S)p\rangle } = E^{(M)}_{B \langle X \rangle}E^{(pMp)}_{Bp\langle \alpha^{-1}p(X+\epsilon S)p\rangle }
\end{equation*}
so that
\begin{equation*}
 \lim_{\epsilon \to 0} \norm{((X+\epsilon S) \otimes I_n - \eta(\epsilon))^{-1} - \alpha^{-1}E_{\frk M_n(B \langle X \rangle)}^{(M_n(M))}((\alpha^{-1}p(X+\epsilon S)p)\otimes I_n - \beta(p \otimes I_n))^{-1}} = 0
\end{equation*}
But also
\begin{equation*}
 \lim_{\epsilon \to 0} \norm{((\alpha^{-1}p(X+\epsilon S)p)\otimes I_n - \beta(p \otimes I_n))^{-1} - (X_p\otimes I_n - \beta(p \otimes I_n))^{-1}} = 0
\end{equation*}
Putting these equations together, we have that
\begin{equation*}
 \alpha^{-1}E_{\frk M_n(B\langle X \rangle)}^{(\frk M_n(M))}(X_p\otimes I_n - \beta(p \otimes I_n))^{-1} = \lim_{\epsilon \to 0} (X\otimes I_n - \eta(\epsilon))^{-1}
\end{equation*}
which implies that $\eta(\epsilon)$ converges in norm to some $\eta \in \mb H_+(\frk M_n(B))$ with
\begin{equation*}
 \alpha^{-1}E_{\frk M_n(B \langle X \rangle)}^{(\frk M_n(M))}(X_p \otimes I_n - \beta(p \otimes I_n))^{-1} = (X \otimes I_n - \eta)^{-1}
\end{equation*}

\end{proof}
\begin{rmk}
The same proof as Corollary \ref{diagcor} can be used to extend Theorem \ref{mainthm} to resolvents in $\Delta_+\frk M_n(B)$.
 \begin{cor*}
Let $(M,\tau)$ be a von Neumann algebra with faithful normal trace state, and $1 \in B \subset M$ a W$^*$-subalgebra.  Suppose $X = X^* \in M$ and that $p \in M$ is a projection which is $B$-free with $X$ and such that $p$ is independent from $B$ with respect to $\tau$.  Let $\alpha$ denote $\tau(p)$, and put $X_p = \alpha^{-1}pXp$.  Then there is an analytic function $\Phi_n:\Delta_+\frk M_n(B) \to \Delta_+\frk M_n(B)$ such that
\begin{equation*}
\alpha^{-1}E^{(\frk M_n(M))}_{\frk M_n(B \langle X \rangle)}\left(X_p\otimes I_n - \beta(p \otimes I_n)\right)^{-1} = (X\otimes I_n - \Phi_n(\beta))^{-1}
\end{equation*}
for $\beta \in \Delta_+\frk M_n(B)$. \qed
\end{cor*}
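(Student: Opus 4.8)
The plan is to repeat the proof of Corollary~\ref{diagcor} almost verbatim, with Theorem~\ref{mainthm} replacing Theorem~\ref{subord} and with the unital completely positive rescaled conditional expectation $\Psi=E^{(M)}_{B\langle X\rangle}\circ\psi$ of Proposition~\ref{expmorph} playing the role of the abstract $\Psi$ there; the only point needing an argument beyond the diagonal reduction is that the entries of the resulting subordination matrix lie in $B$ and not merely in $W^*(B\langle X\rangle)$.  Fix $\beta=(b_{ij})_{1\le i,j\le n}\in\Delta_+\frk M_n(B)$ and set
\begin{equation*}
 \gamma=\alpha^{-1}E^{(\frk M_n(M))}_{\frk M_n(B\langle X\rangle)}\bigl(X_p\otimes I_n-\beta(p\otimes I_n)\bigr)^{-1}.
\end{equation*}
Since $X_p\otimes I_n$ is self-adjoint while $\beta(p\otimes I_n)$ is lower triangular over $Bp$ with diagonal entries $b_{ii}p\in\mb H_+(Bp)$, the matrix $X_p\otimes I_n-\beta(p\otimes I_n)$ lies in $\Delta_-\frk M_n(pMp)$, so by Remark~\ref{half} its inverse lies in $\Delta_+\frk M_n(pMp)$; and since $\frk M_n(\Psi)$ acts entrywise and is positive and unital, $\gamma\in\Delta_+\frk M_n\bigl(W^*(B\langle X\rangle)\bigr)$.

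By the triangular structure, $\gamma_{ii}=\Psi\bigl((X_p-b_{ii}p)^{-1}\bigr)$, and Theorem~\ref{mainthm} in the case $n=1$ gives $\gamma_{ii}=(X-\eta_{ii})^{-1}$ with $\eta_{ii}=F_1(b_{ii})\in\mb H_+(B)$.  Hence $\gamma^{-1}\in\Delta_-$ with $(\gamma^{-1})_{ii}=X-\eta_{ii}$, so $\Phi_n(\beta):=X\otimes I_n-\gamma^{-1}$ is lower triangular with diagonal entries $\eta_{ii}\in\mb H_+(B)$ and $\gamma=(X\otimes I_n-\Phi_n(\beta))^{-1}$.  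To see that the off-diagonal entries of $\Phi_n(\beta)$ also lie in $B$ I would use the scalar conjugation trick linking $\Delta_+\frk M_n(B)$ with $\mb H_+(\frk M_n(B))$: for $D_\lambda=\operatorname{diag}(1,\lambda,\dots,\lambda^{n-1})$ one has $D_\lambda^{-1}\beta D_\lambda\to\operatorname{diag}(b_{11},\dots,b_{nn})\in\mb H_+(\frk M_n(B))$ as $\lambda\to\infty$, so $\beta_\lambda:=D_\lambda^{-1}\beta D_\lambda\in\mb H_+(\frk M_n(B))$ for all large $\lambda$.  As $D_\lambda$ is a real-scalar matrix it commutes with $X_p\otimes I_n$, with $p\otimes I_n$, and, acting entrywise, with $E^{(\frk M_n(M))}_{\frk M_n(B\langle X\rangle)}$; conjugating the defining formula for $\gamma$ then gives
\begin{equation*}
 \alpha^{-1}E^{(\frk M_n(M))}_{\frk M_n(B\langle X\rangle)}\bigl(X_p\otimes I_n-\beta_\lambda(p\otimes I_n)\bigr)^{-1}=D_\lambda^{-1}\gamma D_\lambda,
\end{equation*}
and comparing with Theorem~\ref{mainthm} applied to $\beta_\lambda$ yields $\gamma=(X\otimes I_n-D_\lambda F_n(\beta_\lambda)D_\lambda^{-1})^{-1}$.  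Therefore $\Phi_n(\beta)=D_\lambda F_n(\beta_\lambda)D_\lambda^{-1}\in\frk M_n(B)$, which is independent of $\lambda$ since $\gamma$ is; combined with the previous sentence, $\Phi_n(\beta)\in\Delta_+\frk M_n(B)$.

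Analyticity of $\Phi_n$ is then immediate, since $\beta\mapsto\gamma$ is norm-holomorphic on $\Delta_+\frk M_n(B)$ and $\Phi_n(\beta)=X\otimes I_n-\gamma^{-1}$.  The main obstacle is precisely the step just carried out: forcing the entries of the subordination matrix into $B$, which is the one place where the triangular geometry of $\Delta_+$ needs more than the half-plane conclusion of Theorem~\ref{mainthm} (the same remark applies to the proof of Corollary~\ref{diagcor}).  Everything else is formal: the hypotheses of Theorem~\ref{mainthm} coincide with those assumed here, $X_p\otimes I_n-\beta(p\otimes I_n)$ is invertible in $\frk M_n(pMp)$ simply because it is lower triangular with invertible diagonal, and the stability of lower triangularity and of the half-planes under the entrywise, positive, unital map $\frk M_n(\Psi)$ needs no computation.
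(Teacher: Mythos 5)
Your argument is correct, and its skeleton is the same as the paper's: the paper disposes of this corollary by saying it follows by ``the same proof as Corollary \ref{diagcor}'', i.e.\ use the lower--triangular structure to see that $\gamma=\alpha^{-1}E^{(\frk M_n(M))}_{\frk M_n(B\langle X\rangle)}\bigl(X_p\otimes I_n-\beta(p\otimes I_n)\bigr)^{-1}$ lies in $\Delta_+$, identify the diagonal entries via the $n=1$ case of Theorem \ref{mainthm}, and read off $\Phi_n(\beta)=X\otimes I_n-\gamma^{-1}$ using $(\kappa^{-1})_{ii}=(\kappa_{ii})^{-1}$ from Remark \ref{half}. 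Where you genuinely differ is on the point you single out: that the \emph{off-diagonal} entries of $\gamma^{-1}-X\otimes I_n$ lie in $B$ and not merely in $W^*(B\langle X\rangle)$. The paper's proof of Corollary \ref{diagcor} asserts this membership without an explicit argument (in that abstract setting it is ultimately backed by the corepresentation and power-series machinery of Theorem \ref{subord}, run over the convex domain $\Delta_+\frk M_n(B)$, but this is not spelled out, and it is not re-run in the compression setting of Theorem \ref{mainthm}, which was obtained by the semicircular regularization rather than by that machinery). Your conjugation trick with $D_\lambda=\operatorname{diag}(1,\lambda,\dots,\lambda^{n-1})$ fills this in cleanly: since $D_\lambda$ is scalar it commutes with $X_p\otimes I_n$, $p\otimes I_n$ and with the entrywise conditional expectation, $\beta_\lambda=D_\lambda^{-1}\beta D_\lambda$ lands in $\mb H_+(\frk M_n(B))$ for large $\lambda$ (its imaginary part is $\operatorname{diag}(\operatorname{Im}b_{ii})+O(\lambda^{-1})$), and comparing the conjugated resolvent identity with Theorem \ref{mainthm} applied to $\beta_\lambda$ gives $\Phi_n(\beta)=D_\lambda F_n(\beta_\lambda)D_\lambda^{-1}\in\frk M_n(B)$, which together with the diagonal computation yields $\Phi_n(\beta)\in\Delta_+\frk M_n(B)$. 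So your route uses only the $\mb H_+$-valued statement of Theorem \ref{mainthm} as a black box and supplies a justification the paper leaves implicit, at the cost of a small extra argument; the paper's route is shorter and uniform with Corollary \ref{diagcor} but leans on that unstated step. Incidentally, your $D_\lambda$ argument would equally serve to make the off-diagonal step in Corollary \ref{diagcor} explicit.
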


\end{rmk}

\bibliographystyle{amsalpha}
\bibliography{ref}

\end{document}